\date{}
\author{ Guy Bouchitt\'e}
\address{UFR Sciences, Universit\'e du Sud-Toulon-Var, 
BP20132, 83957 La Garde Cedex, France}
\email{bouchitte@univ-tln.fr}
\newtheorem{theorem}{{\bf Theorem}}[section]
\newtheorem{lemma}[theorem]{{\bf Lemma}}
\newtheorem{definition}[theorem]{{\bf Definition}}
\newtheorem{rmk}[theorem]{{\bf Remark}}
\newtheorem{proposition}[theorem]{{\bf Proposition}}
\newcommand{\dom}{{\rm Dom\hspace{.5pt}}}
\newcommand{\Ri}{{\rm I\kern -0.12em R}} 
\newcommand{\Ni}{{\rm I\kern -0.12em N}} 
\def\M{{\mathcal M}}
\def\med{\medskip\noindent}
\def\disp{\displaystyle}
\def\O{\Omega}
\def \argmin{\mathop{\rm argmin}}
\def \dps{\displaystyle}
\def \div {\mathop {\rm div}\nolimits}
\def \dom {\mathop {\rm dom}}
\def \e{\varepsilon}
\def \liminf {\mathop {\rm liminf} \limits}
\def \lim {\mathop {\rm lim} \limits}
 \def \max {\mathop {\rm max}}
\def \N {{{\rm I} \kern - .15 em {\rm N}}}
\def \P {{\mathcal P}}
\def \spt {\mathop {\rm spt}}
\def\trait (#1) (#2) (#3){\vrule width #1pt height #2pt depth #3pt}
\def\to{\rightarrow}
\def\wto{\rightharpoonup}
\def\e{\varepsilon}
\def\R{\mathbb{R}}
\def\N{\mathbb{N}}
\def\f{\varphi}
\def\e{\varepsilon}
\def\f{\varphi}
\def\to{\rightarrow}
\def\e{\varepsilon}
\def\R{\mathbb R}
\def\N{\mathbb N}
\def\bd{\partial}
\begin{document}

 \title{\bf Convex Analysis and Duality }
 \maketitle
 \begin{center}  ({\em  Encyclopedia of Mathematical physics, pp.642-652, 2006})
 \end{center}
 
 \medskip
Convexity is an important notion in non linear optimization theory as well as in infinite dimensional functional
analysis. As will be seen below, very simple and powerful tools will be derived from elementary duality
arguments  (which are byproducts of the Moreau-Fenchel transform and Hahn Banach Theorem). We will emphasize on
applications to a large range of variational problems. Some arguments of measure theory will be skipped.

\section{Basic convex analysis}

In the following, we denote by $X$ a normed vector space, $X^*$ the topological dual of $X$.
If a different topology from the normed topology is used on $X$, we will denote it by $\tau$. 
For every $x\in X$ and $A\subset X$, ${\mathcal V}_x$ denotes the open neighbourghoods of $x$ and ${\rm int}\, A , {\rm cl}\, A$  the
interior and the closure of $A$.  
We deal with extended real-valued functions
$f : X \to {\R} \cup \{+\infty\}$. We denote by ${\rm dom}\, f = f^{-1}({\R})$ and by ${\rm epi}\, f  =
\left\{(x,\alpha)\in X\times\R \ :\ f(x)\le\alpha \right\}$ the {\em domain} and
the {\em epigraph} of $f$ respectively. We say that 
$f$ is  {\em proper} if ${\rm dom}\, f\not = \emptyset$. Recall that
\ $f$ is {\em convex}  if for every $(x,y)\in X^2$ and $t\in [0,1]$, there holds
$$ f( tx+ (1-t)  y)\  \le\ t f(x) + (1-t) f(y)\quad \text{ (by convention $\infty + a =+\infty$)}\ .$$
The notion of convexity for a subset $A \subset X$ is recovered that by saying that  $\chi_A$ is convex, where
 its {\em indicator} function $\chi_A$ is defined by setting
$$ \chi_A (x)  =  0  \quad \text{ if  $x\in A$}\quad , \quad  \chi_A (x)  = +\infty  \qquad \text{ otherwise} \ .$$

\subsection{Continuity and lowersemicontinuity}

A first consequence of the convexity is the continuity on the topological interior of the domain.
We refer for instance to \cite{Le} for a proof of
\begin{theorem}\label{cont} Let $f : X \to {\R} \cup \{+\infty\}$ be convex and
proper. Assume that  $\disp\sup_U f
< +\infty$  where $U$ is a suitable  open subset  of $X$. Then $f$ is continuous and locally Lipchitzian on\
all ${\rm int}({\rm dom}\, f$).
\end{theorem}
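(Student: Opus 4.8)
\medskip

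The plan is to isolate one purely local fact --- that an upper bound for $f$ on a ball forces a Lipschitz estimate on a slightly smaller ball --- and then to propagate upper boundedness from the given open set $U$ to an arbitrary point of $\interieur(\dom f)$. For the local fact, suppose $f\le M$ on a ball $B(x_0,r)$. Translating we may take $x_0=0$, and subtracting the (finite) value $f(0)$ we may assume $f(0)=0$; then convexity applied to $0=\frac12 x+\frac12(-x)$ gives $0\le\frac12 f(x)+\frac12 f(-x)$, hence $f(x)\ge -f(-x)\ge -M$, i.e. $|f|\le M$ on $B(0,r)$. Given $x\ne y$ in $B(0,r/2)$, push $y$ away from $x$ by setting $z=y+\frac{r}{2\|y-x\|}(y-x)$, which still lies in $B(0,r)$; since $y=(1-\lambda)x+\lambda z$ with $\lambda=\|y-x\|/(\|y-x\|+r/2)\le 2\|y-x\|/r$, convexity yields $f(y)-f(x)\le\lambda\bigl(f(z)-f(x)\bigr)\le 2M\lambda\le\frac{4M}{r}\|y-x\|$, and exchanging the roles of $x$ and $y$ gives $|f(y)-f(x)|\le\frac{4M}{r}\|y-x\|$.

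Next I would propagate upper boundedness. From $\sup_U f<+\infty$ and $U$ open we get a ball $B(x_1,\rho_1)\subset U$ and a constant $M$ with $f\le M$ on it; in particular $U\subset\interieur(\dom f)$, so this interior is nonempty. Fix $x_0\in\interieur(\dom f)$ and choose $\rho>0$ with $B(x_0,\rho)\subset\dom f$. For $t>0$ small enough the reflected point $x_2:=(1+t)x_0-t x_1$ satisfies $\|x_2-x_0\|=t\|x_0-x_1\|<\rho$, hence $m:=f(x_2)<+\infty$. A direct check gives $x_0=\frac{1}{1+t}x_2+\frac{t}{1+t}x_1$, so as $w$ ranges over $B(x_1,\rho_1)$ the point
\[
\frac{1}{1+t}\,x_2+\frac{t}{1+t}\,w \;=\; x_0+\frac{t}{1+t}\,(w-x_1)
\]
ranges over the ball $B\bigl(x_0,\frac{t}{1+t}\rho_1\bigr)$, on which convexity gives $f\le\frac{m+tM}{1+t}<+\infty$. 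Thus $f$ is bounded above on a neighbourhood of $x_0$.

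Combining the two steps, every $x_0\in\interieur(\dom f)$ admits a ball on which $f$ is bounded above, hence, by the local fact, a smaller ball on which $f$ is Lipschitz; in particular $f$ is continuous and locally Lipschitz on all of $\interieur(\dom f)$. The manipulations with convex combinations are routine; the step that needs care is the propagation --- choosing the reflection $x_2$ so that $x_0$ becomes an \emph{interior} convex combination of $x_2$ and the region where $f$ is already controlled, and then tracking the radii --- and this is where I expect the only genuine difficulty to lie.
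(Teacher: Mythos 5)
Your proof is correct. Note that the paper does not actually prove this theorem --- it defers to the reference \cite{Le} --- so there is no in-paper argument to compare against; your two-step scheme (a two-sided bound on a ball via the reflection $f(x)\ge -f(-x)\ge -M$, then the stretched point $z$ giving the Lipschitz estimate $\frac{4M}{r}\|y-x\|$, followed by propagation of upper boundedness to any point of ${\rm int}({\rm dom}\, f)$ by writing it as an interior convex combination of a reflected point $x_2$ and the ball where $f$ is already bounded) is exactly the standard argument found there, and all the convex-combination and radius computations check out.
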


 As an immediate corollary, a convex function on a normed space is continuous
provided it is majorized by a locally bounded function. In the finite dimensional case, it is easily deduced that
a finite valued convex function $f:\R^d \to \R$ is locally Lipschitz. Futhermore, by Aleksandrov's theorem, $f$ is almost everywhere 
twice differentiable and the nonnegative Hessian matrix $\nabla^2 f$ coincides with the absolutely continuous part of the distributional Hessian
matrix $D^2f$ (it is a Radon measure taking values in the non negative symmetric matrices).

 However in infinite dimensional spaces, for ensuring compactness properties
(as for instance in condition ii) of Theorem
\ref{exis} below), we
need to use weak topologies and the situation is not so simple. A major idea consists 
in substituting the continuity property with {\em lowersemicontinuity} (in short {\em lsc}).

\begin{definition}\  A function $f : X \to {\R} \cup \{+\infty\}$ is
$\tau$-lsc at $x_0 \in X$ if
for all $\alpha \in \R$,  there exists $U
\in {\mathcal{V}}_{x_0}$ such that $f>\alpha$\ on $U$.
In particular $f$ will be lsc on all $X$ provided $f^{-1}((r,+\infty))$ is open  for every  $r \in {\R}$  .
\end{definition}

\begin{rmk} \rm a) The following  sequential notion can be also used:  $f$ is {\em $\tau$-sequentially lsc} at $x_0$ if
$$
\forall (x_n) \subset X \quad  x_n \stackrel{{\tau}}{\to} x_0\quad \Longrightarrow
\quad\liminf_{n \to +\infty} f(x_n) \geq f(x_0)\ .
$$
It turns out that this notion (weaker in general) is  equivalent to the previous one provided $x_0$ admits a countable basis of
neighborhoods.

b) A well known consequence of Hahn Banach Theorem is that,  for convex functions, the lower semicontinuity  property with respect to
 the normed topology of $X$ is equivalent  to the weak (or weak sequential) lower semicontinuity.  
\end{rmk}

\begin{theorem} \label{exis} (existence)\  Let $f : X \to \R
 \cup \{+\infty\}$ be proper, such
that:\\
(i) $f$ is $\tau$-lsc\quad,\quad
(ii) $\forall r \in\R$, $f^{-1}((-\infty,r])$ is $\tau$-relatively
compact.

\med Then there is $\overline{x} \in X$ such that $f(\overline{x}) = \inf f$ and ${\rm
argmin}\, f :=\{x\in X\, |\, f(x) =
\inf f\}$ is $\tau$-compact.
\end{theorem}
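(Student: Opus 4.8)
The plan is to run the direct method of the calculus of variations in its purely topological form, so that no metrizability of $\tau$ is needed: rather than extracting a convergent subsequence from a minimizing sequence, I will exploit the finite intersection property of $\tau$-compact sets. Set $m := \inf_X f$, which lies in $[-\infty,+\infty)$ because $f$ is proper, and for each real number $r>m$ introduce the sublevel set $S_r := f^{-1}((-\infty,r])$.

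The first key step is that each $S_r$ (for $r>m$) is a nonempty $\tau$-compact set: it is nonempty by the definition of the infimum; it is $\tau$-closed because, by hypothesis (i), $f^{-1}((r,+\infty))$ is $\tau$-open and $S_r$ is its complement; and a $\tau$-closed set which is $\tau$-relatively compact, as granted by hypothesis (ii), coincides with its closure and is therefore $\tau$-compact. The second key step is the compactness argument: fixing any $r_0>m$, the sets $S_r$ with $m<r\le r_0$ form a nested family of nonempty $\tau$-closed subsets of the $\tau$-compact set $S_{r_0}$, so by the finite intersection property their intersection — which is exactly $\bigcap_{r>m} S_r$ — is nonempty. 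Picking $\bar x$ in this intersection gives $f(\bar x)\le r$ for every $r>m$, hence $f(\bar x)\le m$ and therefore $f(\bar x)=m$; in particular $m>-\infty$, for otherwise $\bar x$ would satisfy $f(\bar x)\le r$ for all $r\in\R$, which is incompatible with $f$ not taking the value $-\infty$. Finally, $\argmin f=f^{-1}(\{m\})=\bigcap_{r>m}S_r$ is an intersection of $\tau$-closed sets, hence $\tau$-closed, and it is contained in the $\tau$-compact set $S_{r_0}$, so it is itself $\tau$-compact.

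The only delicate point — and the place where the weak-topology framework genuinely matters — is the passage from \emph{relatively compact} to \emph{compact} for the sublevel sets; this is precisely where lower semicontinuity enters, and once it is secured the rest is elementary point-set topology. If one is willing to assume $\tau$ metrizable, one may instead take a minimizing sequence $x_n$ with $f(x_n)\to m$, observe that $x_n\in S_{r_0}$ eventually so that a subsequence $\tau$-converges to some $\bar x$, and conclude $f(\bar x)\le\liminf_n f(x_n)=m$ by sequential lower semicontinuity; but the compactness argument above has the advantage of avoiding this restriction.
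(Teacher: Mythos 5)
Your proof is correct and complete. The paper states this existence theorem without any proof, so there is nothing to compare against; your argument is the standard purely topological form of the direct method (a nested family of nonempty $\tau$-closed sublevel sets inside one $\tau$-compact sublevel set, then the finite intersection property), and it correctly isolates the single point where hypothesis (i) is used, namely to upgrade the sublevel sets from relatively compact to compact so that $\argmin f=\bigcap_{r>\inf f}S_r$ is a nonempty compact set. The remark that $\inf f>-\infty$ falls out of the argument rather than having to be assumed is accurate and worth keeping.
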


In practice the choice of the topology $\tau$ is ruled by the condition ii) above. For example if $X$ is a reflexive infinite dimensional Banach space
and if $f$ is coercive (that is $\lim_{\Vert x\Vert \to \infty} f(x)=+\infty$), we may take for $\tau$ the weak topology (but never the normed topology). 
This restriction implies in practise that the first condition in Theorem \ref{exis} may  fail. In this case,
it is often useful to substitute
$f$ with its lower semicontinuous envelope.

\begin{definition}\label{relax} Given a topology $\tau$,  the {\em relaxed function} $\overline{f} \ (=\overline{f}^\tau)$ is defined as
$$
\overline{f} (x) \ =\ \sup \{g(x) | g :X \to \R \cup \{+\infty\}, g \mbox{ is }
{\tau}-lsc, g \leq f\}\ .
$$
\end{definition}
It is easy to check that
 $f$ is  $\tau$-lsc at $x_0$ if and only if\quad $\overline{ f }(x_0) =f(x_0)\ .$ Futhermore:
$$\overline{f}(x) = \sup_{U \in {\mathcal{V}}_u} \inf_U f\quad,\quad
{\rm epi}\, \overline{f}\, =\, {\rm cl}_{(X \times \R)}\, ({\rm epi} f)\ .$$
We can now state the relaxed version of Theorem \ref{exis}
\begin{theorem} (Relaxation)\  Let $f : X \to \R \cup \{+\infty\}$, then:\
 $\inf f = \inf \overline{f}\ .$ Assume futher that, for all real r, $f^{-1}((-\infty,r])$ is
 ${\mathcal{T}}$-relatively compact; then $f$ attains its minimum
and
${\rm Argmin}\,  f =
{\rm Argmin}\, \overline{f} \cap \{x \in X | f(x) =
\overline{f}(x)\}.  $
\end{theorem}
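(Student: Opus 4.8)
The plan is to split the statement into three assertions and treat them in order: (1) $\inf f = \inf\overline f$; (2) under the compactness hypothesis, $\overline f$ attains its minimum and so does $f$; (3) the identification of $\mathrm{Argmin}\,f$. The two facts I would take as already established are the formula $\mathrm{epi}\,\overline f = \mathrm{cl}_{X\times\R}(\mathrm{epi}\,f)$ stated just before the theorem, and Theorem \ref{exis} applied to the lsc function $\overline f$.

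First I would prove $\inf f = \inf\overline f$. Since $\overline f\le f$ by construction, $\inf\overline f\le\inf f$ is immediate. For the reverse inequality, note that the constant function $g\equiv\inf f$ (with the convention that it equals $-\infty$ only in the degenerate case, which one handles separately) is $\tau$-lsc and satisfies $g\le f$, hence $g\le\overline f$ pointwise by Definition \ref{relax}, giving $\inf f\le\overline f(x)$ for all $x$, i.e. $\inf f\le\inf\overline f$. If $\inf f=-\infty$ there is nothing to prove since then $\inf\overline f=-\infty$ as well.

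Next, assuming $f^{-1}((-\infty,r])$ is $\tau$-relatively compact for every real $r$, I want to apply Theorem \ref{exis} to $\overline f$. Lowersemicontinuity of $\overline f$ is automatic (a supremum of $\tau$-lsc functions is $\tau$-lsc, or equivalently $\mathrm{epi}\,\overline f$ is closed). The point requiring care is hypothesis (ii) for $\overline f$: one must check that $\overline f^{-1}((-\infty,r])$ is $\tau$-relatively compact. I would argue via the epigraph formula: if $\overline f(x)\le r$ then $(x,r)\in\mathrm{cl}(\mathrm{epi}\,f)$, so every $\tau$-neighbourhood of $x$ meets $f^{-1}((-\infty,r'])$ for each $r'>r$; hence $\overline f^{-1}((-\infty,r])\subset\mathrm{cl}\big(f^{-1}((-\infty,r+1])\big)$, which is compact as the closure of a relatively compact set. (Since $\overline f\le f$, the converse inclusion-type bound is trivial and not even needed.) Theorem \ref{exis} then yields $\bar x$ with $\overline f(\bar x)=\inf\overline f=\inf f$, and that this common value is finite (the proper case), and that $\mathrm{Argmin}\,\overline f$ is $\tau$-compact. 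Moreover $\inf f$ is attained by $f$: pick any minimizing sequence $x_n$ with $f(x_n)\to\inf f$; it lies in some fixed sublevel set of $f$, hence has a $\tau$-cluster point $x^*$, and $\tau$-lower semicontinuity of $\overline f$ together with $\overline f\le f$ gives $\overline f(x^*)\le\inf f=\inf\overline f$, so $\overline f(x^*)=\inf f$; then one still has to produce an honest minimizer of $f$ — but this is exactly what the last assertion describes, so it is cleaner to fold this into step (3).

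Finally, for the identification $\mathrm{Argmin}\,f=\mathrm{Argmin}\,\overline f\cap\{x: f(x)=\overline f(x)\}$: the inclusion $\subset$ is clear, since $f(x)=\inf f=\inf\overline f$ forces $\overline f(x)\le f(x)$ and $\overline f(x)\ge\inf\overline f=f(x)$, hence equality and $x\in\mathrm{Argmin}\,\overline f$. Conversely, if $x\in\mathrm{Argmin}\,\overline f$ and $f(x)=\overline f(x)$, then $f(x)=\overline f(x)=\inf\overline f=\inf f$, so $x\in\mathrm{Argmin}\,f$. The nonemptiness of the right-hand side then follows: we must exhibit $x$ with $\overline f(x)=\inf\overline f$ \emph{and} $f(x)=\overline f(x)$. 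Here I would use the epigraph characterization at the minimum value $m:=\inf\overline f$: the pair $(\bar x,m)$ lies in $\mathrm{cl}(\mathrm{epi}\,f)$, so there is a net $(x_\alpha,t_\alpha)\in\mathrm{epi}\,f$ with $x_\alpha\xrightarrow{\tau}\bar x$ and $t_\alpha\to m$; the $x_\alpha$ eventually lie in a fixed $\tau$-relatively compact sublevel set of $f$, so a subnet converges to some $x^*$ with $f(x^*)\le\liminf t_\alpha=m$ along that subnet (using $\tau$-lsc of $\overline f$ and $\overline f\le f$ more carefully — actually one gets $\overline f(x^*)\le m$, hence $=m$, and $f(x^*)\ge \overline f(x^*)=m$, while $f(x^*)\le m$ needs the epigraph points, giving $f(x^*)=m$). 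That $x^*$ is the desired element. The main obstacle is this last point — extracting from the closure-of-epigraph description an actual point where $f$ equals its relaxed value — because it is where the compactness hypothesis is genuinely used and where one must juggle nets/sequences and the distinction between $f$ and $\overline f$ with some care; the $\inf f=\inf\overline f$ part and the set-theoretic identification are routine.
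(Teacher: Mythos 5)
The paper prints no proof for this theorem, so I can only judge your argument on its own terms. Most of it is correct and standard: $\inf\overline f\le\inf f$ from $\overline f\le f$, the reverse inequality via the constant lsc minorant $g\equiv\inf f$ (with the degenerate case $\inf f=-\infty$ handled separately), the transfer of compactness through the inclusion $\overline f^{-1}((-\infty,r])\subset \mathrm{cl}\bigl(f^{-1}((-\infty,r+1])\bigr)$ so that Theorem \ref{exis} applies to $\overline f$, and the purely set-theoretic identification $\mathrm{Argmin}\,f=\mathrm{Argmin}\,\overline f\cap\{x:f(x)=\overline f(x)\}$, which as you observe needs only the equality of infima and no compactness at all.

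The genuine gap is the final step, where you try to exhibit $x^*$ with $\overline f(x^*)=m$ \emph{and} $f(x^*)=m$. From a net $(x_\alpha,t_\alpha)\in\mathrm{epi}\,f$ with $x_\alpha\xrightarrow{\tau}x^*$ and $t_\alpha\to m$ you can only conclude $(x^*,m)\in\mathrm{cl}(\mathrm{epi}\,f)=\mathrm{epi}\,\overline f$, i.e. $\overline f(x^*)\le m$; deducing $f(x^*)\le m$ would require $\mathrm{epi}\,f$ itself to be closed, i.e. $f$ to be $\tau$-lsc, which is exactly what is not assumed. This step cannot be repaired, because the assertion ``$f$ attains its minimum'' is false as stated: take $X=\R$, $f(x)=x^2$ for $x\ne0$ and $f(0)=1$. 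All sublevel sets are relatively compact, $\overline f(x)=x^2$, $\inf f=\inf\overline f=0$, yet $\mathrm{Argmin}\,f=\emptyset$ (consistently with the displayed identity, since $\mathrm{Argmin}\,\overline f\cap\{f=\overline f\}=\{0\}\cap(\R\setminus\{0\})=\emptyset$). The intended statement is surely that $\overline f$ attains its minimum; with that reading your proof is complete once you delete the attempt to show the right-hand side of the $\mathrm{Argmin}$ identity is nonempty --- it can be empty, and the identity holds regardless. Your own hedging at ``$f(x^*)\le m$ needs the epigraph points'' is precisely where the argument, correctly, refuses to go through.
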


\subsection{Moreau-Fenchel conjugate}

The duality between $X$ and $X^*$ will be denoted by the symbol $\langle\cdot | \cdot \rangle$. If $X$ is an Euclidian space,
we identify $X^*$ to $X$ via the scalar product denoted $(\cdot|\cdot)$.  

\begin{definition} Let $f : X \to \R \cup \{+\infty\}$. The {\em Moreau-Fenchel conjugate} $f^* : X^* \to \R \cup \{+\infty\}$ of
$f$ is defined by setting, for every $x^* \in X^*$ :
$$ f^*(x^*) = \sup \{\langle x|x^* \rangle - f(x) | x \in X\}.$$
In a symmetric  way, if $f^*$ is proper on $X^*$, we define the {\em biconjugate} $f^{**}: X \to \R \cup\{+\infty\}$ by setting
$$  f^{**}(x) = \sup \{\langle x|x^* \rangle - f^*(x^*) | x^* \in X^*\}.$$

\end{definition}
As a consequence, the so called {\em Fenchel inequality} holds
$$ \langle x | x^* \rangle \ \le f(x)\ +\  f^*(x^*) \quad ,\quad  (x,x^*)\in X\times X^*\ .$$
Notice that $f$ does not need to be convex. However if $f$ is convex, then $f^*$ agrees with the Legendre-Fenchel transform.

\begin{definition} Let $f : X \to \R \cup \{+\infty\}$. The {\em subdifferential} of $f$ at $x$ is the possibly void subset of $\bd f(x)\subset X^*$ defined
by $$ \bd f(x) :=  \ \{ x^* \in X^* \ :\ f(x) + f^*(x^*) = \langle x, x^*\rangle\}\ .$$
\end{definition}
It is easy to check that $\bd f(x)$ is  convex and weak-star closed.
  Moreover, if $f$ is convex and has a differential (or Gateaux derivative) $f'(x)$ at
$x$, then $\bd f(x) = \{ f'(x)\}$.
After summarizing some elementary properties of the Fenchel transform, we give  examples in $\R^d$ or in infinite
dimensional spaces.
\begin{lemma} \label{fenchel}

\med (i)\ $f^*$ is convex,  lsc with respect to the weak star topology of $X^*$.

\med (ii)\ $f^*(0) = - \inf f\ $ and \ $f \geq g \Rightarrow f^* \leq g^*$.

\med (iii)\  $ \dps (\inf_i f_i)^* = \sup_i f_i^*\ ,$ for every family $\{f_i\}$.

\med (iv)\  $f^{**}(x) = \sup \left\{ g(x) \ :\ g\ \text{\rm affine continuous on $X$ and }\ g\le f\right\}\ $

(by convention, the supremum is identically $-\infty$ if no such $g$ exists ).

\end{lemma}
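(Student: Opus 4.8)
The plan is to dispatch (i)--(iii) straight from the definitions and to put the real effort into (iv), which amounts to a reformulation of the definition of the biconjugate once the affine continuous functionals on $X$ are made explicit. For (i), fix $x \in X$: by the very definition of the weak-star topology on $X^*$, the affine map $x^* \mapsto \langle x | x^* \rangle - f(x)$ (interpreted as the constant $-\infty$ when $x \notin \dom f$) is weak-star continuous, hence convex and weak-star lsc. Since $f^* = \sup_{x\in X}\big(\langle x|\cdot\rangle - f(x)\big)$ is a pointwise supremum of such functions, and a supremum of a family of convex $\tau$-lsc functions is convex and $\tau$-lsc for every topology $\tau$ --- indeed $\{x^* : f^*(x^*) > r\} = \bigcup_{x}\{x^* : \langle x|x^*\rangle - f(x) > r\}$ is weak-star open --- item (i) follows.

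For (ii), $f^*(0) = \sup_x\big(0 - f(x)\big) = -\inf_X f$ is immediate, and if $f \ge g$ then $\langle x|x^*\rangle - f(x) \le \langle x|x^*\rangle - g(x)$ for all $x, x^*$, so taking the supremum in $x$ gives $f^* \le g^*$. For (iii), for every $x^*$ one simply interchanges two suprema:
$$\Big(\inf_i f_i\Big)^*(x^*) = \sup_x\Big(\langle x|x^*\rangle - \inf_i f_i(x)\Big) = \sup_x \sup_i\big(\langle x|x^*\rangle - f_i(x)\big) = \sup_i f_i^*(x^*).$$

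For (iv) I would argue by two inequalities, using that every affine continuous $g : X \to \R$ has the form $g(y) = \langle y | x^* \rangle - c$ for a (unique) $x^* \in X^*$ and $c \in \R$ --- this is merely the definition of $X^*$, no separation argument being needed here. If $f^*(x^*) < +\infty$, then Fenchel's inequality gives $\langle y|x^*\rangle - f^*(x^*) \le f(y)$ for all $y$, so $g_{x^*} := \langle\cdot|x^*\rangle - f^*(x^*)$ is an affine continuous minorant of $f$; evaluating at $x$ and taking the supremum over such $x^*$ yields $f^{**}(x) \le \sup\{g(x) : g \text{ affine continuous},\ g \le f\}$. Conversely, if $g(y) = \langle y|x^*\rangle - c \le f(y)$ for all $y$, then $\langle y|x^*\rangle - f(y) \le c$ for all $y$, that is $f^*(x^*) \le c$, whence $g(x) = \langle x|x^*\rangle - c \le \langle x|x^*\rangle - f^*(x^*) \le f^{**}(x)$; taking the supremum over all such $g$ gives the reverse inequality. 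The single point that deserves care --- and the only real obstacle --- is the bookkeeping of the infinite values: one must observe that ``$f$ has no affine continuous minorant'' is equivalent to ``$f^* \equiv +\infty$'', in which case the set $\{g \text{ affine continuous} : g \le f\}$ is empty and the supremum defining $f^{**}$ is over a family all of whose members equal $-\infty$, so the formula degenerates consistently with the stated empty-supremum convention.
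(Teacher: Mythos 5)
Your proof is correct and follows essentially the same route as the paper: (i) via $f^*$ being a supremum of weak-star continuous affine functions, (ii)--(iii) by direct computation, and (iv) via the key observation that an affine continuous $g=\langle\cdot|x^*\rangle-c$ satisfies $g\le f$ if and only if $f^*(x^*)\le c$. Your treatment is merely more detailed than the paper's, in particular in the bookkeeping of the case $f^*\equiv+\infty$.
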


\begin{proof}  i) is  a direct consequence of the fact that $f^*$ can be written as the  supremum of 
functions $g_x$ where $g_x := \langle x | \cdot\rangle - f(x)$. Clearly these functions  are affine and weakly star continuous on $X^*$.
The  assertions ii), iii)  are trivial. To obtain iv), it is enough to observe that an affine function function $g$ of the form
$g(x) = \langle x, x^*\rangle - \beta$ satisfies $g\le f$ iff $ f^{*}(x^*)\le \beta$.

\end{proof}

\med \textbf{Example 1.} Let $f : X \to \R$, be defined by $f(x)=\frac{1}{p}\|x\|^p_X$, $1 < p < +\infty$, then:
$$f^*(x^*) = \frac{1}{p'} \Vert x^* \Vert^{p'}_{X^*} \quad,\quad \text{with}\ \frac{1}{p} + \frac{1}{p'} = 1\ ,$$
whereas, for $p=1$, we find $f^* =  \chi_{B^*}$ where $B^*=\{\Vert x^* \Vert \le 1\}$.

\med\textbf{Example 2.}\ Let $A\in \R^{d^2}_{\rm sym}$ be a  symmetric positive definite matrix and let
$\disp f(x):= \frac{1}{2} (Ax|x)\ (x\in\R^d)\ .$
Then, for all $y\in \R^d$, we have $\disp f^*(y)= \frac{1}{2} (A^{-1} y|y)\ .$
Notice that if $A$ has a negative eigenvalue, then $f^*\equiv +\infty$.

\med Particular examples on $R^d$ are also very popular. For instance:

\med
\textit{Minimal surfaces:}\   $f(x)= \sqrt{1+ |x|^2} \quad ,\quad f^*(y)=\begin{cases} - \sqrt{1- |y|^2} &  \textrm{if}\ |y|\le 1 \\
 +\infty& \textrm{otherwise}\end{cases} \ .$\\ 
\textit{Entropy:}\ $f(x)= \begin{cases} x\log x & \textrm{if}\  x\in \R_+ \\ +\infty &
\textrm{otherwise} \end{cases} \quad,\quad  f^*(y)= \exp\, (y\!-\!1)\ . $

\med
\textbf{Example 3.} Let $C\subset X$ be convex, and let $f = \chi_C$. Then:
$$f^*(x^*) = \sigma_C(c^*) = \sup_{x \in C} \langle x |
x^* \rangle \qquad ( \textit{support function of C})\ .$$
Notice that if $M$ is a subspace of $X$, then:\ $(\chi_M)^* = \chi_{M^\perp}$.
We specify now a  particular case of interest: 

Let $\O$ be a bounded open subset of $\R^n$. Take $X= C_0(\overline{\O}; \R^d)$ to be the Banach space of continuous functions on  
the compact $\overline{\O})$ with values in $\R^d$. As usual, we identify the dual $X^*$  with the space $\M_b(\overline{\O};\R^d)$ of $\R^d$-valued Borel
measures on $\overline{\O}$ with finite total variation. Let $K$ be a closed convex of $R^d$ such that $\O\in K$. 
Then $\rho^0_K(\xi) := \sup\{ (\xi|z) \ :\ z\in K\}\ $ 
is a nonnegative convex lsc and positively 1- homogeneous function on $\R^d$   (for example $\rho_K$ is the Euclidean norm if $K$ is the unit ball of
$\R^d$). Let us define $C:= \{ \f\in X\ :\ \f(x) \in K \ ,\ \forall x\in\O\}$. Then, we have:
\begin{equation}\label{H} (\chi_C)^* (\lambda) \ =\ \int_\O \rho^0_K(\lambda) \ :=\ 
\int_\O \rho^0_K \left( \frac{d\lambda}{d\theta}\right)\, \theta(dx)\ ,
\end{equation}
 where $\theta $ is any non negative Radon measure such that $\lambda \ll \theta$ (the choice of $\theta$ is indifferent).  
In the case where  $K$ is the unit ball, we recover the total variation of $\lambda$.

\med
\textbf{Example 4.} \textit{(Integral functionals)}\ Given $1 \leq p < +\infty$, $(\O,\mu, {\mathcal T})$ a  measured space and
$\varphi : \O \times \R^d \to [0,+\infty]\ ,$ a ${\mathcal T} \otimes B_{\Ri^d}$- measurable integrand.
Then the partial conjugate $\varphi^*(x,z^*) := \sup\{\langle z | z^* \rangle -\varphi(x,z)\ :\ z\in \R^d\}$
is a convex measurable integrand. Let us define:
$$
I_\f : u\in (L^p_\mu)^d    \to \int_\O \varphi(x, u(x)) d\mu\ \ \in \R\cup\{+\infty\}\ ,$$
and assume that $I_\f$ is proper. Then there holds $(I_\f)^*= I_{\f^*}$, where:
$$
(I_\f)^* : v\in  (L^{p'}_\mu)^d  \to \int_\O \varphi^*(x,v(x)) d\mu\ .$$ 

\section{Duality arguments}

\subsection{Two key results}

The first result related to the biconjugate $f^{**}$ is a consequence of the Hahn-Banach Theorem.
Recalling the assertion v) of Lemma \ref{fenchel}, we notice that the existence of an affine minorant for $f$ is equivalent to the
properness of $f^*$ ( that is $\exists x_0^*\in X^*\ :\ f^*(x_0^*)<+\infty$).

\begin{theorem} \label{bi} Let $f : X \to \R \cup \{+\infty\}$ be convex and
proper. Then

\med
(i) $f$ is lsc at $x_0$ if and only if  $f^*$ is proper and  $f^{**}(x_0) = f(x_0)$.\\
In particular, the lower semicontinuity of $f$  on all $X$ is equivalent to the identity $f\equiv f^{**}.$

\med (ii) If $f^*$ is proper, then $f^{**} = \overline{f}$.
\end{theorem}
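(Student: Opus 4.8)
The plan is to isolate a single core fact, proved by Hahn--Banach separation, and deduce both items from it: \emph{if $\varphi:X\to\R\cup\{+\infty\}$ is convex, proper and lsc on all of $X$, then $\varphi\equiv\varphi^{**}$.} To prove it, fix $x_0\in X$ and a real number $a<\varphi(x_0)$. Since $\varphi$ is lsc at $x_0$ there are $U\in{\mathcal V}_{x_0}$ and $\beta\in(a,\varphi(x_0))$ with $\varphi>\beta$ on $U$, so the open set $U\times(-\infty,\beta)$ meets neither $\epi\varphi$ nor its closure; thus $(x_0,a)$ does not belong to the nonempty closed convex set $\cl(\epi\varphi)\subset X\times\R$, and the Hahn--Banach separation theorem provides $(x^*,s)\in X^*\times\R$, $(x^*,s)\ne(0,0)$, and $c\in\R$ with $\langle x|x^*\rangle+st\ge c>\langle x_0|x^*\rangle+sa$ for all $(x,t)\in\epi\varphi$. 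Letting $t\to+\infty$ forces $s\ge0$. If $x_0\in\dom\varphi$, then $s=0$ is impossible (it would give $\langle x|x^*\rangle\ge c>\langle x_0|x^*\rangle$ for all $x\in\dom\varphi$, contradicting $x_0\in\dom\varphi$), so $s>0$ and $g:=s^{-1}(c-\langle\cdot|x^*\rangle)$ is an affine continuous minorant of $\varphi$ with $g(x_0)>a$; hence $\varphi^{**}(x_0)\ge g(x_0)>a$ by Lemma \ref{fenchel}(iv), and letting $a\uparrow\varphi(x_0)$, together with $\varphi^{**}\le\varphi$ (again Lemma \ref{fenchel}(iv)), gives $\varphi^{**}(x_0)=\varphi(x_0)$.

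It remains to treat $x_0\notin\dom\varphi$, i.e.\ $\varphi(x_0)=+\infty$ with $a\in\R$ arbitrary. If $s>0$ we conclude as above that $\varphi^{**}(x_0)\ge g(x_0)>a$ for every $a$, so $\varphi^{**}(x_0)=+\infty$. If $s=0$ we obtain $x^*\in X^*$ with $\langle x|x^*\rangle\ge c>\langle x_0|x^*\rangle$ for all $x\in\dom\varphi$; choosing any affine continuous minorant $h$ of $\varphi$ (one exists by the previous paragraph applied at a point of $\dom\varphi\ne\emptyset$), the function $h+\lambda(c-\langle\cdot|x^*\rangle)$ satisfies $h+\lambda(c-\langle\cdot|x^*\rangle)\le h\le\varphi$ on $\dom\varphi$, hence on $X$, for every $\lambda>0$, while its value $h(x_0)+\lambda(c-\langle x_0|x^*\rangle)$ at $x_0$ tends to $+\infty$ as $\lambda\to+\infty$; so again $\varphi^{**}(x_0)=+\infty=\varphi(x_0)$. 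This establishes the core fact.

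Now I deduce (i). For the direct implication let $f$ be convex, proper and lsc at $x_0\in\dom f$: the separation argument of the first paragraph uses only that $f$ is convex and lsc \emph{at} $x_0$ and that $x_0\in\dom f$, so it applies unchanged and produces, for each real $a<f(x_0)$, an affine continuous minorant of $f$ with value $>a$ at $x_0$; therefore $f^*$ is proper (recall that $f$ has an affine continuous minorant if and only if $f^*$ is proper) and $f^{**}(x_0)=f(x_0)$. For the converse, assume $f^*$ proper and $f^{**}(x_0)=f(x_0)$: then $f^{**}$ is convex and lsc on $X$ (a supremum of affine continuous functions) and $f^{**}\le f$, so for every $\alpha<f(x_0)=f^{**}(x_0)$ the lower semicontinuity of $f^{**}$ yields $U\in{\mathcal V}_{x_0}$ with $f\ge f^{**}>\alpha$ on $U$, i.e.\ $f$ is lsc at $x_0$.

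For (ii) assume $f^*$ proper. Then $\overline f$ is convex (its epigraph $\cl(\epi f)$ is convex because $f$ is), lsc, and proper: $\overline f\le f$ gives $\dom\overline f\supseteq\dom f\ne\emptyset$, and $\overline f$ dominates any affine continuous minorant of $f$, so it is never $-\infty$. Applying the core fact to $\varphi=\overline f$ yields $\overline f=(\overline f)^{**}$; since $\overline f\le f$ implies $(\overline f)^{**}\le f^{**}$ (two applications of the antitonicity in Lemma \ref{fenchel}(ii)), while $f^{**}$, being an lsc minorant of $f$, is $\le\overline f$ by Definition \ref{relax}, we obtain $f^{**}\le\overline f=(\overline f)^{**}\le f^{**}$, that is $f^{**}=\overline f$. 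The ``in particular'' clause of (i) now follows: if $f$ is lsc everywhere it is lsc at some point of $\dom f$, so $f^*$ is proper by the direct implication and $f\equiv\overline f=f^{**}$ by (ii); conversely $f\equiv f^{**}$ forces $f$ lsc everywhere since $f^{**}$ always is. The one delicate step throughout is the separation, and in particular the verification that the ``vertical'' component $s$ is strictly positive --- exactly where convexity of $f$ (so $\cl(\epi f)$ is convex) and the hypothesis $x_0\in\dom f$ are used; the case $x_0\notin\dom f$ needs the additional tilting by the functional arising when $s=0$. The rest is bookkeeping with conjugates and extended-real inequalities, with $f^*$ kept proper whenever $f^{**}$ is treated as a genuine $\R\cup\{+\infty\}$-valued function.
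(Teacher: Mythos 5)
Your proof is correct and follows essentially the same route as the paper's: strict Hahn--Banach separation of a point lying below the closed epigraph produces an affine continuous minorant (whence $f^*$ proper and $f^{**}(x_0)\ge\alpha$ for every $\alpha<f(x_0)$), the converse follows from the lower semicontinuity of $f^{**}\le f$, and (ii) is obtained by applying the identity $\varphi=\varphi^{**}$ to $\varphi=\overline f$ together with the sandwich $f^{**}\le\overline f=(\overline f)^{**}\le f^{**}$. You are in fact slightly more complete than the paper at one point: the paper's justification that the vertical component $\beta_0$ of the separating functional is strictly positive tacitly assumes $x_0\in\dom f$, whereas you treat the vertical-hyperplane case ($s=0$ with $x_0\notin\dom\varphi$) explicitly by tilting an existing affine minorant.
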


\begin{proof} \ We notice that by Lemma \ref{fenchel}, $f^{**}\le f$ and $\ f^{**}$ is lower semicontinuous (even for the weak topology).
Therefore $f^{**}\le \overline{f}$ and moreover $f$ is lsc at $x_0$ if $ f^{**}(x_0)\ge f(x_0)$.
Conversely, if  $f$ is lsc at $x_0$, for every $\alpha_0 < f(x_0)$, there exits a neighbourghood $V$  of $x_0$ such that
$V \times (-\infty,\alpha_0] \cap \overline{{\rm epi} f} =\emptyset$. It follows that $\overline{{\rm epi} f}$ is a proper closed
 convex subset of $X\times\R$ which does not intersect the compact singleton $\{(x_0,\alpha_0)\}$. By applying Hahn-Banach strict separation Theorem,
there exists $(x_0^*,\beta_0) \in X^*\times \R\, $ such that:
$$  \langle x_0 , x_0^* \rangle + \alpha_0 \beta_0 \quad < \quad   \langle x , x_0^* \rangle + \alpha \beta_0\qquad \text{for all $(x,\alpha)\in {\rm epi}
f$} \ .$$
Taking $\alpha \to \infty$ and $x\in {\rm dom}f$, we find $\beta_0\ge 0$. In fact $\beta_0>0$ as the strict inequality above would be violated for $x=x_0$.
Eventually, we obtain that $f$ is minorized by the affine continuous function $g(x)= - \langle x -x_0, \frac{x_0^*}{\beta} \rangle + \alpha_0$.
Thus we conclude that $f^*$ is proper and that $f^{**}(x_0)\ge\alpha_0$.

\med
The assertion ii) is a direct consequence of the equivalence in i).

\end{proof}

\begin{theorem} \label{minstar}
Let $X$ be a normed space and let $f : X \rightarrow
[0,+\infty]$ be a convex and proper function, assume that $f$ is
continuous at 0, then

\med
i) $f^*$ achieves its minimum on $X^*$

\med
ii) $f(0) = f^{**}(0) = - \inf f^*$

\end{theorem}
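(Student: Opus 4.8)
The plan is to read off part (ii) directly from Theorem \ref{bi} and to reserve the real work for part (i). Since $f$ is convex, proper and continuous at $0$, it is in particular lsc at $0$, so Theorem \ref{bi}(i) applies: it gives both that $f^*$ is proper and that $f^{**}(0)=f(0)$. On the other hand, straight from the definition of the biconjugate, $f^{**}(0)=\sup_{x^*\in X^*}\big(\langle 0\,|\,x^*\rangle-f^*(x^*)\big)=-\inf_{X^*}f^*$. Chaining these two equalities yields $f(0)=f^{**}(0)=-\inf f^*$, which is exactly (ii); and the infimum will be a genuine minimum as soon as (i) is established.

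For (i) the continuity of $f$ at $0$ must be used in full strength (lower semicontinuity alone would not suffice). First I would convert local boundedness of $f$ near $0$ into a coercivity estimate for $f^*$: continuity at $0$ provides $r>0$ and $M<+\infty$ with $f\le M$ on the ball $\{x\in X:\|x\|_X\le r\}$ (in particular $f(0)\le M<+\infty$), whence for every $x^*\in X^*$
$$ f^*(x^*)\ \ge\ \sup_{\|x\|_X\le r}\big(\langle x\,|\,x^*\rangle-f(x)\big)\ \ge\ \sup_{\|x\|_X\le r}\langle x\,|\,x^*\rangle \ -\ M\ =\ r\,\|x^*\|_{X^*}-M\ . $$
Consequently, for each real $\lambda$, the sublevel set $\{x^*:f^*(x^*)\le\lambda\}$ is contained in the closed ball of $X^*$ of radius $(\lambda+M)/r$, hence is norm-bounded; moreover, taking $x=0$ in the supremum shows $f^*\ge -f(0)>-\infty$, and since $f^*$ is proper the number $\inf f^*$ is a finite real.

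The second step is a compactness argument in the weak-star topology $\tau=\sigma(X^*,X)$. By Lemma \ref{fenchel}(i), $f^*$ is $\tau$-lsc, so each sublevel set $\{f^*\le\lambda\}$ is $\tau$-closed; being also norm-bounded by the previous step, the Banach--Alaoglu theorem makes it $\tau$-compact, hence $\tau$-relatively compact. Thus $f^*$ is a proper function on $X^*$ satisfying hypotheses (i) and (ii) of the existence Theorem \ref{exis} with $\tau$ the weak-star topology, and that theorem delivers $\overline{x^*}\in X^*$ with $f^*(\overline{x^*})=\inf f^*$, which is (i).

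The only point requiring care — and the one I expect to be the main obstacle to phrase cleanly — is this first step: extracting from the hypothesis ``$f$ continuous at $0$'' the uniform bound $f\le M$ on a ball, and using $\sup_{\|x\|_X\le r}\langle x\,|\,x^*\rangle = r\,\|x^*\|_{X^*}$ to read off coercivity of $f^*$. Everything after that is a routine invocation of results already proved (Theorem \ref{bi}, Lemma \ref{fenchel}, Theorem \ref{exis}) together with the Banach--Alaoglu theorem.
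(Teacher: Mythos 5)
Your proposal is correct and follows essentially the same route as the paper: the same coercivity estimate $f^*(x^*)\ge r\,\Vert x^*\Vert_{X^*}-M$ from the local bound on $f$, the same appeal to Lemma \ref{fenchel}(i) and Theorem \ref{exis} with the weak-star topology (the paper leaves Banach--Alaoglu implicit in ``bounded, thus $\tau$-relatively compact''), and the same one-line deduction of (ii) from Theorem \ref{bi}.
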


\begin{proof} i)\   Let $M$ be an upperbound of $f$ on the ball $\{ \Vert x\Vert \le R\}$. Then
$$
f^*(x^*)\ \geq\ \sup \left\{\langle x, x^* \rangle -
f(x) \ :\ \Vert x \Vert \leq R \right\}\ \geq\  R \Vert x^* \Vert_{X^*} - M \ .
$$
Hence, for every $r$,  the set $\{x^* \in X^*:
\ f^*(x^*) \leq r\}$ is bounded, thus $\tau-$ relatively compact where $\tau$ is the weak-star topology on $X^*$.
By assertion i) of Lemma \ref{fenchel}, $f^*$ is $\tau$- lsc and Theorem \ref{exis} applies.\quad
 ii) \  By Theorem \ref{bi}, since $f$ is convex proper and lsc at $x_0=0$, we have
$f(0)=f^{**}(0) = - \inf f^*$.

\end{proof}

\subsection{Some useful consequences}

\begin{proposition} \label{conjugate_sum} \ (\textit{Conjugate of a sum})\quad
Let $f,g : X \to \R \cup \{+\infty\}$ be convex such that:
\begin{equation}\label{qualif} \exists x_0 \in X \ :\  \textrm{$f$ is continuous at $x_0$ and $g(x_0) < +\infty$}\ .
\end{equation}
Then: \\
(i) $(f+g)^*(x^*) = \inf\limits_{x^*_1+x_2^* = x^*}\{f^*(x^*_1) + g^*(x^*_2)\}$ (the equality holds in $\overline{\R}$).\\
(ii) If both sides of the equality in (i) are finite, then the infimum in the right-hand side is achieved.
\end{proposition}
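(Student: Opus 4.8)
The inequality ``$\le$'' in (i) is the elementary half and needs no qualification hypothesis: for any decomposition $x_1^*+x_2^*=x^*$ and any $x\in X$,
$$\langle x,x^*\rangle-f(x)-g(x)\ =\ \bigl(\langle x,x_1^*\rangle-f(x)\bigr)+\bigl(\langle x,x_2^*\rangle-g(x)\bigr)\ \le\ f^*(x_1^*)+g^*(x_2^*)\ ,$$
and taking the supremum over $x$ and then the infimum over decompositions gives $(f+g)^*(x^*)\le\inf\{f^*(x_1^*)+g^*(x_2^*)\}$ in $\overline{\R}$. All the substance of the statement lies in the reverse inequality and in the attainment claim (ii); the plan is to obtain both by applying Theorem \ref{bi} and the compactness argument of Theorem \ref{minstar} to a suitable marginal function.

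\medskip
Fix $x^*\in X^*$. If $(f+g)^*(x^*)=+\infty$ there is nothing more to prove, so assume $m:=(f+g)^*(x^*)<+\infty$; since \eqref{qualif} makes $f+g$ proper, automatically $m>-\infty$. I would introduce the marginal function
$$ v(y)\ :=\ \inf_{x\in X}\bigl\{\,f(x+y)-\langle x+y,x^*\rangle+g(x)\,\bigr\}\ ,\qquad y\in X\ ,$$
and check successively: (a) $v$ is convex, being the partial infimum over $x$ of the jointly convex map $(x,y)\mapsto f(x+y)-\langle x+y,x^*\rangle+g(x)$; (b) $v(0)=-m$; (c) after the substitution $u=x+y$, $v$ is the infimal convolution of $x\mapsto f(x)-\langle x,x^*\rangle$ and $x\mapsto g(-x)$, so that, the conjugate of an infimal convolution being the sum of the conjugates (cf.\ Lemma \ref{fenchel}(iii)), $v^*(y^*)=f^*(y^*+x^*)+g^*(-y^*)$, whence $\inf_{y^*}v^*(y^*)$ coincides --- via $x_1^*=y^*+x^*$, $x_2^*=-y^*$ --- with the right-hand side of (i); (d) taking $u=x_0+y$ in the infimum, $v(y)\le f(x_0+y)-\langle x_0+y,x^*\rangle+g(x_0)$, which stays below a finite constant $M$ for $\|y\|\le R$ with $R$ small, because $f$ is continuous at $x_0$ and $g(x_0)<+\infty$. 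From (b), (d) and an elementary convexity argument, $v$ never takes the value $-\infty$; hence $v$ is proper, and by Theorem \ref{cont} it is continuous --- in particular lsc --- at the origin.

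\medskip
Applying Theorem \ref{bi} to $v$ at $0$ then yields $v^{**}(0)=v(0)$, i.e.\ $-\inf_{y^*}v^*(y^*)=v(0)=-m$, which together with (c) is precisely the equality in (i). For (ii), suppose $m\in\R$ (equivalently, that both sides of (i) are finite). The bound $v\le M$ on $\{\|y\|\le R\}$ gives $v^*(y^*)\ge R\,\|y^*\|_{X^*}-M$, so every sublevel set of $v^*$ is bounded, hence weak-star relatively compact by Banach--Alaoglu; since $v^*$ is weak-star lsc by Lemma \ref{fenchel}(i), Theorem \ref{exis} applies (exactly as in the proof of Theorem \ref{minstar}(i)) and provides $\bar y^*\in X^*$ with $v^*(\bar y^*)=\inf v^*=m$, so that $x_1^*=\bar y^*+x^*$, $x_2^*=-\bar y^*$ achieve the infimum in (i).

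\medskip
The main obstacle will be the handling of the marginal function $v$: one must be sure the hypotheses of Theorem \ref{bi} and of the compactness step genuinely hold, namely that $v$ is proper (does not drop to $-\infty$) and is locally bounded above --- hence continuous --- at $0$. This is exactly where the qualification condition \eqref{qualif} enters; the rest is only the $\pm\infty$ bookkeeping (the trivial case $m=+\infty$, the properness of $f+g$) and the formal calculus of Fenchel conjugates and infimal convolutions.
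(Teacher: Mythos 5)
Your proof is correct and follows essentially the same route as the paper's: both study the perturbation function $h(p)=\inf_x\{f(x+p)+g(x)\}$ (up to where the linear shift by $x^*$ is absorbed), use the qualification condition \eqref{qualif} to bound it above near $0$ and hence get continuity there, compute its conjugate as $f^*(\cdot)+g^*(-\cdot)$ suitably translated, and conclude via the biconjugate identity of Theorem \ref{bi} together with the weak-star compactness argument of Theorem \ref{minstar}. Your write-up is merely more explicit about the easy inequality, the case $(f+g)^*(x^*)=+\infty$, and the properness of the marginal function, but the substance is identical.
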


\begin{proof}  Without any loss of generality, we may assume that $x^*=0$ ( we reduce to this case by substituting $g$ with $g - \langle\cdot, x^*\rangle$).
We let $$ h(p)\ =\ \inf \{f(x+p)+g(x)|x\in X\}\ .$$
 Noticing that
$(p,x)\mapsto f(x+p)+g(x)$ is convex, we infer that  $h(p)$ is convex as well. As $h$ is majorized by the function  $p\mapsto f(x_0+p)+g(x_0)$ which by
(\ref{qualif}) continuous at 0, we deduce from Theorem \ref{cont} and Theorem \ref{minstar} that $h(0)= h^{**}(0)$  and that $h^*$ achieves its infimum.
Now $h(0) = \inf (f+g) = -(f+g)^*(0)$ and
\begin{eqnarray*}
h^*(p^*) &=& \sup \{  \langle p,p^* \rangle - h(p)\ :\ p\in X\}\\
&=&\sup\{  \langle p,p^* \rangle -f(x+p)-g(x)\ :\ x\in X, p\in X\}\\
 &= &g^*(-p^*) + f^*(p^*).
\end{eqnarray*}
The assertions i) ii) follow since $ - h^{**}(0) = \min h^* = \min \left\{ g^*(-p^*) + f^*(p^*)\right\}.$
\end{proof}

\begin{proposition}\label{comp} \ (\textit{Composition})\quad Let X,Y two Banach spaces and
$A : X \mapsto Y$  be a linear operator with dense domain $D(A)$.
Let $\Psi : Y \to \R \cup \{+\infty\}$ be a convex, l.s.c. function and let $F\mapsto X$ be the convex functional 
defined by:
$$ F (u)=  \Psi ( A u) \quad \text{\rm if $u\in D(A)$}\quad,\quad F(u) = +\infty \quad \text{\rm otherwise}\ .$$
 Assume that there exists $u_0 \in D(A)$ such that $\Psi$ is continuous at $Au_0$.
 Then:

i) \ The Fenchel conjugate of $F$ is given by:
$$  \forall f\in X^*,\qquad F^*(f)=\inf \{ \Psi^*(\sigma)\ :\ \sigma \in Y^*,\  A^*\sigma=f\}\ ,$$
where, if both sides of the equality are finite, the infimum in the right-hand side is achieved.

ii) \ If in addition $Y$ is reflexive and  $\Psi$ is lsc coercice,  we have
\begin{equation}\label{relax}
\overline{F}(u) = F^{**}(u) = \inf \{\Psi(p) |\  (u,p) \in \overline{G(A)}
\}\ ,
\end{equation}
where $G(A)$ denotes the graph of $A$.
\end{proposition}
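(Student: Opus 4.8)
The plan is to reduce both assertions to the previously established machinery by rewriting $F$ as a sum of composed functionals on a suitable product space. For part (i), the key observation is that $F^*(f)$ is $-\inf$ of $u\mapsto F(u)-\langle u,f\rangle$, i.e. $-\inf_{u\in D(A)}\{\Psi(Au)-\langle u,f\rangle\}$. I would introduce on $X\times Y$ the functions $G_1(u,p)=\Psi(p)-\langle u,f\rangle$ and $G_2=\chi_{G(A)}$, the indicator of the graph of $A$; then $F^*(f)=-\inf_{(u,p)}\{G_1(u,p)+G_2(u,p)\}=(G_1+G_2)^*(0,0)$. The qualification hypothesis — $u_0\in D(A)$ with $\Psi$ continuous at $Au_0$ — says precisely that $G_1$ is continuous at the point $(u_0,Au_0)\in G(A)=\dom G_2$, so Proposition \ref{conjugate_sum} applies: $(G_1+G_2)^*(0,0)=\min\{G_1^*(\xi,\sigma)+G_2^*(-\xi,-\sigma)\}$. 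Now $G_1^*(\xi,\sigma)=\Psi^*(\sigma)+\chi_{\{\xi=f\}}$ is finite only when $\xi=f$, and $G_2^*(-\xi,-\sigma)=\sigma_{G(A)}(-\xi,-\sigma)=\sup\{-\langle u,\xi\rangle-\langle Au,\sigma\rangle:u\in D(A)\}$. The latter supremum is $0$ if $-\xi-A^*\sigma=0$ (interpreting $A^*\sigma$ in the sense that $u\mapsto\langle Au,\sigma\rangle$ extends continuously, using density of $D(A)$) and $+\infty$ otherwise. Combining, $F^*(f)=\min\{\Psi^*(\sigma):A^*\sigma=f\}$, with the minimum attained when both sides are finite — which is exactly the claim.

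For part (ii), I would compute $F^{**}$ directly from the formula in (i). By definition $F^{**}(u)=\sup_{f\in X^*}\{\langle u,f\rangle-F^*(f)\}=\sup_f\{\langle u,f\rangle-\inf_{A^*\sigma=f}\Psi^*(\sigma)\}=\sup_{\sigma\in Y^*}\{\langle u,A^*\sigma\rangle-\Psi^*(\sigma)\}$, reindexing the double sup/inf by $\sigma$ (every $\sigma$ produces an admissible $f=A^*\sigma$). Then $\langle u,A^*\sigma\rangle$ should be read as the value of the extended functional; the point is that this equals $\langle p,\sigma\rangle$ for any $p$ with $(u,p)\in\overline{G(A)}$, since the pairing is continuous on $X\times Y$ and constant along the fibre of $\overline{G(A)}$ over $u$ (the fibre being an affine translate of $\{p:(0,p)\in\overline{G(A)}\}$, on which $\sigma$ must vanish for $F^{**}(u)$ to be finite). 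Hence $F^{**}(u)=\sup_\sigma\{\langle p,\sigma\rangle-\Psi^*(\sigma)\}=\Psi^{**}(p)=\Psi(p)$ using that $\Psi$ is convex lsc proper (Theorem \ref{bi}), and this value is the same for every admissible $p$; when no such $p$ exists, $F^{**}(u)=+\infty$. This gives $F^{**}(u)=\inf\{\Psi(p):(u,p)\in\overline{G(A)}\}$. That $F^{**}=\overline F$ is immediate from Theorem \ref{bi}(ii) once we check $F^*$ is proper, which follows from part (i) since $\Psi$ being lsc coercive on the reflexive space $Y$ makes $\Psi^*$ proper (indeed finite and continuous at $0$).

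The main obstacle is handling the unbounded operator $A$ with merely dense domain: the symbol $A^*\sigma$ and the pairing $\langle u,A^*\sigma\rangle$ for $u\notin D(A)$ need careful interpretation. Concretely, $A^*\sigma=f$ should mean $\langle Au,\sigma\rangle=\langle u,f\rangle$ for all $u\in D(A)$; the supremum defining $G_2^*$ is then $0$ or $+\infty$ exactly as claimed, and density of $D(A)$ guarantees $f$ is uniquely determined by $\sigma$. For (ii), the role of reflexivity and coercivity is to ensure that, whenever the optimal $\sigma$'s are constrained, the relaxed problem $\inf\{\Psi(p):(u,p)\in\overline{G(A)}\}$ is actually attained and the closure $\overline{G(A)}$ in $X\times Y$ (norm topology, which for the purposes of the duality coincides with the relevant weak closure since $G(A)$ is a convex set and $\Psi$ is weakly lsc) is the correct object — one should verify that a point $(u,p)\in\overline{G(A)}$ with $\Psi(p)<\infty$ can be approximated by $(u_n,Au_n)$ with $\Psi(Au_n)$ bounded, using coercivity to extract a weakly convergent subsequence of the $Au_n$. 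I expect the bookkeeping in this approximation step, rather than any deep new idea, to be where the real work lies.
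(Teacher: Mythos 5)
Your part (i) is correct and is essentially the paper's own argument: the same splitting on $X\times Y$ into the indicator of the graph $G(A)$ plus the functional $(u,p)\mapsto\Psi(p)$ (you merely absorb the linear term $-\langle u,f\rangle$ into the second summand instead of evaluating the conjugate at $(f,0)$), the same appeal to Proposition \ref{conjugate_sum} under the same qualification, and the same computation of the two conjugates, with $A^*\sigma=f$ correctly interpreted through the density of $D(A)$.

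Part (ii), however, has a genuine gap. From (i) you correctly get $F^{**}(u)=\sup\{\langle u,A^*\sigma\rangle-\Psi^*(\sigma)\,:\,\sigma\in D(A^*)\}$, and for $(u,p)\in\overline{G(A)}$ you may replace $\langle u,A^*\sigma\rangle$ by $\langle p,\sigma\rangle$. But the supremum runs only over $\sigma\in D(A^*)$, a subspace of $Y^*$ contained in the annihilator of $N:=\{q:(0,q)\in\overline{G(A)}\}$, so the identity $\sup_{\sigma}\{\langle p,\sigma\rangle-\Psi^*(\sigma)\}=\Psi^{**}(p)=\Psi(p)$ is unjustified: enlarging the index set to all of $Y^*$ changes the value in general. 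What your computation actually yields is only $F^{**}(u)\le\Psi(p)$ for every admissible $p$, i.e.\ $F^{**}\le J$ where $J(u):=\inf\{\Psi(p):(u,p)\in\overline{G(A)}\}$; the reverse inequality (including $F^{**}(u)=+\infty$ when the fibre over $u$ is empty, which you assert without argument even though the projection of $\overline{G(A)}$ onto $X$ is dense and cannot be separated from $u$) is exactly the nontrivial half, and it is precisely where reflexivity and coercivity must enter. Your closing paragraph gestures at these hypotheses but proposes to verify the wrong statement (approximating points of $\overline{G(A)}$ by graph points with bounded energy). The paper's route closes this gap cleanly: one observes that $J^*=F^*$ (conjugation does not distinguish $\chi_{G(A)}$ from $\chi_{\overline{G(A)}}$ in the sum, so the marginal computation of (i) applies verbatim), hence $F^{**}=J^{**}$, and then proves directly that $J$ is convex, proper and \emph{lsc}: for $u_n\to u$ with $J(u_n)$ bounded, near-minimizing $p_n$ are bounded by coercivity, hence weakly convergent along a subsequence by reflexivity, their limit $p$ satisfies $(u,p)\in\overline{G(A)}$ because this set is a weakly closed subspace, and weak lower semicontinuity of $\Psi$ gives $\liminf J(u_n)\ge\Psi(p)\ge J(u)$. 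Then $J=J^{**}=F^{**}=\overline F$ by Theorem \ref{bi}. Some version of this compactness--lsc lemma is unavoidable, and it is the piece your proposal is missing.
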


\begin{proof} \ i)\  Define  $H,K : X \times Y \to \R \cup \{+\infty\} $  by:
$$
H(u,p)=\chi_{{G(A)}}(u,p)\quad,\quad K(u,p)= \Psi(p).
$$
Then we have the identity $F^*(f)= (H+K)^*(f,0)$, where the  conjugate of $H+K$ is taken with repect to the duality $(X\times Y, X^*\times Y^*)$.
From the assumption, $K$ is continuous at $(u_0, Au_0)\in {\rm dom\, H}$. By Proposition~\ref{conjugate_sum}, we obtain
$$ (H+K)^* (f,0) = \inf_{(g,\sigma) \in X^*\times Y^*} \{K^*(f-g,\sigma) + H^*(g,-\sigma)\}\ .$$
After a simple computation, it is easy to check that:
\begin{equation*}
\begin{array}{ccllc}
H^*(g,-\sigma) &=& 0 \quad &\text{\rm if $A^* \sigma = f$}\quad  & ( +\infty\quad  \text{\rm otherwise})\ ,\\
K^*(f-g, \sigma) &=&  \Psi^*(\sigma) \quad  &\text{\rm if $ \ g = f$} \qquad   & (+\infty \quad\text{\rm otherwise})\ .
\end{array}
\end{equation*}

\med ii) Let $J(u):= \inf \{ \Psi(p) \ :\ (u,p) \in \overline{G(A)} \}.$
As observed for $F^*$ in the proof of i), we have the identity $J^*(f)= (H+K)^*(f,0)$. Therefore,  
in view of Theorem \ref{bi}, $\overline{F} = F^{**}= J^{**}$ and it is enough to prove that $J$ is convex l.s.c. proper.
Let us consider a sequence $(u_n)$ in $X$ converging to some $u\in X$. Without any loss of
generality, we may assume that  $\liminf J(u_n)=\lim J(u_n) < +\infty$. Then there is a
sequence $(p_n)$ such that, for every $n$, $(u_n, p_n)\in \overline{G}(A)$ and $J(u_n)\geq \psi (u_n)- 1/n$. As $\psi $ is coercive, $\{p_n\}$ is bounded
in the reflexive space $Y$ and possibly passing to a subsequence, we may assume that $p_n$ converges weakly to some $p$. Since $\overline{G(A)}$ is a
(weakly) closed subspace of $X\times Y$, we infer that $(u,p)$ as the limit of $(u_n,p_n)$ still
 belongs to $\overline{G(A)}$ . Thus we concude thanks to the (weak) lowersemicontinuity of $\Psi$:
$$ \liminf_{n} J(u_n) = \lim_n \Psi(p_n) \ge\ \Psi(p) \ \ge J(u) \ .$$

\end{proof}

An immediate consequence of Propositions \ref{conjugate_sum} and \ref{comp} is the following variant:
\begin{proposition}\label{combi}  Under the same notation as in Proposition \ref{comp}, let $\Phi:X \to \R \cup \{+\infty\}$ be a convex function
and assume that there exists $u_0 \in D(A)$ such that $F(u_0)<+\infty$ and $\Psi$ is continuous at $Au_0$.
 Then we have
$$ \inf_{u\in X}\left\{ \phi(u) + \Psi(Au) \right\}\ =\ \sup_{\sigma\in Y^*} \left\{- \phi^*(-A^*\sigma) - \Psi^*(\sigma) \right\}\ ,$$
where the supremum in the right hand side is achieved. Furthermore a pair $(\bar u, \bar \sigma)$ is optimal if only if it satisfies the 
relations: $\bar\sigma \in \bd  \Psi(A \bar u)\ $ and $\ -A^*\bar\sigma \in \bd \phi(\bar u)\ .$
\end{proposition}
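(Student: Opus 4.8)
The plan is to carry the problem over to the product space $X\times Y$, where the relation ``$p=Au$'' is turned into a convex constraint, just as in the proof of Proposition \ref{comp}, and then to apply the conjugate-of-a-sum formula of Proposition \ref{conjugate_sum}. First I would introduce the two convex functions $\mathcal H,\mathcal K:X\times Y\to\R\cup\{+\infty\}$,
$$\mathcal H(u,p)=\Phi(u)+\chi_{G(A)}(u,p)\ ,\qquad \mathcal K(u,p)=\Psi(p)\ ,$$
the conjugates being taken for the duality between $X\times Y$ and $X^*\times Y^*$. Because $\Psi$ is continuous at $Au_0$ and $\mathcal K$ does not depend on the first variable, $\mathcal K$ is continuous at $(u_0,Au_0)$, and this point belongs to $\dom\mathcal H$ since $u_0\in D(A)$ and $\Phi(u_0)<+\infty$; hence the qualification condition (\ref{qualif}) holds for the pair $(\mathcal H,\mathcal K)$. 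Moreover, by the very definition of $F$,
$$\inf_{u\in X}\bigl\{\Phi(u)+\Psi(Au)\bigr\}=\inf_{(u,p)\in X\times Y}\bigl\{\mathcal H(u,p)+\mathcal K(u,p)\bigr\}=-(\mathcal H+\mathcal K)^*(0,0)\ .$$
If this common value were $+\infty$ the qualification would be violated (as $\Phi(u_0)+\Psi(Au_0)<+\infty$), and if it were $-\infty$ the asserted identity is immediate by Fenchel's inequality; so I may assume it finite.

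Next I would apply Proposition \ref{conjugate_sum} at the point $(0,0)$, which yields
$$(\mathcal H+\mathcal K)^*(0,0)=\min_{(g,\tau)\in X^*\times Y^*}\bigl\{\mathcal H^*(-g,-\tau)+\mathcal K^*(g,\tau)\bigr\}\ ,$$
with the minimum attained. A direct computation — of the same kind as the one performed for $H^*$ and $K^*$ in the proof of Proposition \ref{comp} — shows that $\mathcal K^*(g,\tau)=\Psi^*(\tau)$ if $g=0$ and $\mathcal K^*(g,\tau)=+\infty$ otherwise (the free variable $u$ forces $g=0$), while
$$\mathcal H^*(0,-\tau)=\sup_{u\in D(A)}\bigl\{-\langle Au,\tau\rangle-\Phi(u)\bigr\}=\Phi^*(-A^*\tau)$$
whenever $\tau\in D(A^*)$, and $\mathcal H^*(0,-\tau)=+\infty$ otherwise, since for $\tau\notin D(A^*)$ the linear form $u\mapsto\langle Au,\tau\rangle$ is unbounded on the subspace $D(A)$, so that the concave expression above is unbounded from above. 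Substituting and renaming $\tau$ as $\sigma$, the right-hand side becomes $\min_{\sigma\in Y^*}\bigl\{\Phi^*(-A^*\sigma)+\Psi^*(\sigma)\bigr\}$; changing sign gives exactly the announced equality, the supremum on the right being attained.

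For the characterization of optimal pairs I would argue via the Fenchel inequality. Let $\bar u\in D(A)$ and $\bar\sigma\in D(A^*)$. In view of the duality just established, $(\bar u,\bar\sigma)$ is an optimal pair if and only if $\Phi(\bar u)+\Phi^*(-A^*\bar\sigma)+\Psi(A\bar u)+\Psi^*(\bar\sigma)=0$, and since $\langle A\bar u,\bar\sigma\rangle=\langle\bar u,A^*\bar\sigma\rangle$ this is the same as
$$\bigl[\Phi(\bar u)+\Phi^*(-A^*\bar\sigma)-\langle\bar u,-A^*\bar\sigma\rangle\bigr]+\bigl[\Psi(A\bar u)+\Psi^*(\bar\sigma)-\langle A\bar u,\bar\sigma\rangle\bigr]=0\ .$$
By Fenchel's inequality both brackets are nonnegative, so both vanish; and the vanishing of the first, resp. the second, is by definition the relation $-A^*\bar\sigma\in\bd\Phi(\bar u)$, resp. $\bar\sigma\in\bd\Psi(A\bar u)$. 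Conversely, these two relations, read backwards, restore the equality of the two members, hence the optimality of $(\bar u,\bar\sigma)$.

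The point I expect to require the most care is the presence of the merely densely defined operator $A$: one cannot simply apply Proposition \ref{conjugate_sum} to $\Phi$ and $F=\Psi\circ A$ on $X$, because $F$ is in general nowhere continuous (nor even finite, nor lsc) when $A$ is unbounded. Moving to $X\times Y$ shifts the continuity requirement onto the variable $p$ and repairs this, but it is then essential, in computing $\mathcal H^*$, to keep track of the domain $D(A^*)$ of the adjoint (and of the fact that the supremum defining $\Phi^*(-A^*\tau)$ may be computed over the dense subspace $D(A)$) — this is the only delicate step of the argument.
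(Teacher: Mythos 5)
Your reduction to the product space $X\times Y$ and the application of Proposition \ref{conjugate_sum} to the pair $(\mathcal H,\mathcal K)$ is the natural way to flesh out the paper's one-line justification (the paper offers no proof beyond ``immediate consequence of Propositions \ref{conjugate_sum} and \ref{comp}''), and your derivation of the extremality relations from the vanishing of the two Fenchel gaps is correct once the duality identity is granted. The gap is exactly at the step you yourself flag as delicate: the identification $\mathcal H^*(0,-\tau)=\Phi^*(-A^*\tau)$ for $\tau\in D(A^*)$, and $=+\infty$ otherwise. What you actually have is $\mathcal H^*(0,-\tau)=\sup_{u\in D(A)}\{\langle u,-A^*\tau\rangle-\Phi(u)\}$, and for a merely convex $\Phi$ the supremum over the dense subspace $D(A)$ can be strictly smaller than the supremum over $X$: density buys nothing without some upper regularity of $\Phi$, which is not assumed. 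You therefore only get $\mathcal H^*(0,-\tau)\le\Phi^*(-A^*\tau)$, hence $-(\mathcal H+\mathcal K)^*(0,0)\ge\sup_\sigma\{-\Phi^*(-A^*\sigma)-\Psi^*(\sigma)\}$, which is weak duality, i.e. the trivial inequality. The companion claim that $\mathcal H^*(0,-\tau)=+\infty$ off $D(A^*)$ fails for the same reason when ${\rm dom}\,\Phi$ is thin (for $\Phi=\chi_{\{u_0\}}$ one finds $\mathcal H^*(0,-\tau)=-\langle Au_0,\tau\rangle$ for every $\tau\in Y^*$).

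This is not a repairable oversight in the stated generality, because the proposition itself fails for a genuinely unbounded $A$. Take $X=Y=\ell^2$, $D(A)$ the finitely supported sequences, $A$ the restriction of the identity, fix a unit vector $u_1\notin D(A)$, and set $\Phi=\chi_S$ with $S=\{tu_1\,:\,0\le t\le1\}$ and $\Psi(p)=\frac12\Vert p-u_1\Vert^2$. All hypotheses hold with $u_0=0$, but the only admissible primal point is $u=0$, so the infimum equals $\frac12$; on the other hand $D(A^*)=\ell^2$ with $A^*\sigma=\sigma$, $\Phi^*(\ell)=\max(0,\langle u_1,\ell\rangle)$, $\Psi^*(\sigma)=\frac12\Vert\sigma\Vert^2+\langle u_1,\sigma\rangle$, and a one-variable computation in $s=\langle u_1,\sigma\rangle$ gives supremum $0$: there is a duality gap. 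So an additional hypothesis is needed, the simplest being that $A$ is bounded and everywhere defined --- which is the case in every application made of this proposition later in the paper ($A=e(\cdot)$ on $W^{1,2}$, $A=\nabla$ on $C^1(\overline\O)$). Then $D(A)=X$, $D(A^*)=Y^*$, your computation of $\mathcal H^*$ is exact, and your proof is complete (it is then the standard Fenchel--Rockafellar argument). If you want to keep $A$ unbounded you must add an assumption guaranteeing that the supremum defining $\Phi^*(-A^*\tau)$ can be computed over $D(A)$ (e.g.\ $\Phi$ continuous), and you should state it explicitly, since it is doing real work.
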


\begin{rmk}\label{closable} \ \rm From the assertion ii) of Proposition \ref{comp}, we may conclude that $F$ is lsc whenever the operator $A$ is closed.
If  now $A$ is merely closable (with closure denoted by $\overline{A}$), we obtain
$$
\overline{F}(u) = G(\bar{A}u) \quad \text{\rm if $u\in\dom \overline{A}$}\quad,\quad \overline{F}(u) =+\infty \quad \text{\rm otherwise}\ .
$$
This is the typical situation when $F$ is an integral functional defined on smooth functions of the kind $F(u) =\int_\O f(x,\nabla u) \, dx\ ,$ where $\O$
is an bounded open subset of $\R^n$, $f:\O\times \R^n \to \R$ is a convex integrand with quadratic growth (i.e.
$ c |z|^2 \le f(x,z) \le C (1+|z|^2$ for suitables $C\ge c>0$). Then $X=L^2(\O), Y=L^2(\O;\R^n)$, $G(v)=\int_\O f(x,v(x)) \, dx$ and $A: u\in C^1(\O)
\mapsto\nabla u\in L^2(\O;\R^n)$. It turns out that $A$ is closable and that the domain of $\bar A$ characterizes the Sobolev space $W^{1,2}(\O)$ on which
$\bar A$ coincides with  the distributional gradient operator.

The situation is more involved if we consider  $F(u) =\int_\O f(x,\nabla u) \, d\mu\ ,\ $ being $\mu$ is a possibly concentrated Radon measure supported on
$\O$. In general the operator $A: u\in C^1(\O)\subset L^2_\mu(\O) \mapsto\nabla u\in L^2_\mu(\O;\R^n)$ is not closable and we need to come back to the
general formula (\ref{relax}). The general structure of $\overline{G(A)}$ has been given in \cite{BoBuSe,BoFr1,BoFr2} namely:
$$ (u,\xi) \in\overline{G(A)} \Longleftrightarrow u \in W^{1,2}_\mu \ , \ \exists \eta\in L^2_\mu(\O;\R^n)\ :\  \xi = \nabla_\mu u + \eta\ ,\
\eta(x) \in T_\mu(x)^\perp\ ,$$
where $T_\mu(x)), \nabla_\mu(x)$ are  suitable notions of tangent space and tangential gradient with respect to $\mu$, and $W^{1,2}_\mu$ denotes the domain
of the extended tangential gradient operator.
\end{rmk}

\begin{rmk} \ \rm The assertion ii) of Proposition\ref{comp} is not valid in the non reflexive case. 
In particular, for  $F(u) =\int_\O f(x,\nabla u) dx$ where $f(x,\cdot)$ has a linear growth at infinity, we need to take $Y$ as the space of
$\R^n$-values vector measures on  $\O$ and the  the relaxed functional $F^{**}$ needs to be indentified on the space $BV(\O)$ of integrable functions with
bounded variations. The computation of $F^{**}$ is a delicate problem for which we refer to \cite{BoDa,BoVa}. 
\end{rmk}

\begin{rmk} \ \rm  By duality
techniques, it is possible also to handle variational integrals of the kind $F(u) = \int_\O f(x,u(x), \nabla u(x))\, dx\ $
  even if the dependence of $f(x,u,z)$ with respect to $u$ is nonconvex. The idea consists in embedding the space $BV(\O)$ in the larger
space $BV(\O\times\R)$ through the map: $u \mapsto 1_u$ where $1_u$ is the characteristic function defined on $\O\times R$ by setting
$1_u(x,t) := 1$ if  $u(x)>t$ \ ,\  $1_u(x,t) := 0$ \  otherwise.  Then it is possible to show, under suitable conditions on the integrand $f$, that
there exists a convex, lsc, 1- homogeneous functional $G: BV(\O\times\R) \to \R \cup \{+\infty\}$ such that $\overline {F}(u)= G(1_u)$.
This functional $G$ is constructed as in the example 3 of section 1 taking $C$ to be a suitable convex subset of $C^0(\O\times\R)$. 
This nice new idea has been the key tool of the calibration method developed recently in \cite{BoAlDa}.

\end{rmk}

\section{Convex variational problems in duality}

\subsection{Finite dimensional case}\quad We sketch the duality scheme in two cases:

\subsubsection{Linear programming} Let $c \in \R^n$, $b\in \R^m$ and $A$ a $m\times n$ matrix. We denote 
by $A^T$ the transpose matrix.  We consider
the linear program:
$$ \inf \left\{ (c| x) \ :\ x\ge 0\ ,\ Ax \le b\right\}\leqno{(\mathcal P)}$$
and its perturbed version ($p\in \R^m$):
$$ h(p) := \inf \left\{ (c| x) \ :\ x\ge 0\ ,\ Ax + p \le b\right\}\ .$$
An easy computation gives:
\begin{equation}\label{dual00}\forall y\in \R^m\quad,\quad  h^*(y) \ =\ \begin{cases} - (b| y) & \text{\rm if}\quad A^T\, y +
c \le 0\ ,\ y\ge 0 \\ +\infty &
\text{\rm otherwise} \end{cases}
\end{equation}
\begin{lemma}\label{Farkas}\ Assume that $\ \inf ({\mathcal P})\,$ is finite. Then:\\
i)  $h$ is convex proper and lsc at $0$. \quad
ii) $ ({\mathcal P})$ has at least one solution. 
\end{lemma}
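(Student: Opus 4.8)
I want to prove both assertions of Lemma~\ref{Farkas} from the structure of the perturbed linear program and the duality machinery already developed (especially Theorem~\ref{bi} and Theorem~\ref{minstar}). The key observations are: $h$ is a value function obtained by partial minimization of a convex problem over the linear constraint, so convexity is automatic; its conjugate $h^*$ was computed explicitly in~(\ref{dual00}); and the hypothesis $\inf(\mathcal P)=h(0)$ finite must be translated into properness statements that let Theorem~\ref{bi} deliver the lsc property and the attainment of the infimum in the dual, which in turn (by strong duality) forces the primal to be solved.

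\textbf{Step 1: $h$ is convex and proper.} First I would note that $(x,p)\mapsto (c\,|\,x)$ restricted to the convex set $\{(x,p):x\ge 0,\ Ax+p\le b\}$ is a convex function of $(x,p)$ (affine objective, convex constraint set), so its partial infimum $h(p)=\inf_x\{\cdots\}$ is convex in $p$, as recalled in the proof of Proposition~\ref{conjugate_sum}. It is proper: $h(0)=\inf(\mathcal P)$ is finite by hypothesis (so $h\not\equiv+\infty$), and $h$ never takes the value $-\infty$ because from~(\ref{dual00}) the conjugate $h^*$ is proper -- indeed since $\inf(\mathcal P)>-\infty$, Farkas-type reasoning (or simply: a finite primal value means the dual feasible set $\{y\ge0:A^Ty+c\le0\}$ is nonempty by LP duality, which one sees directly by separating) gives a point where $h^*$ is finite, hence $h$ admits an affine minorant and is nowhere $-\infty$.

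\textbf{Step 2: $h$ is lsc at $0$.} Here is where the real content sits. By Theorem~\ref{bi}(i), since $h$ is convex and proper, lower semicontinuity of $h$ at $0$ is equivalent to $h^*$ proper together with $h^{**}(0)=h(0)$. Properness of $h^*$ was just established. For the identity $h^{**}(0)=h(0)$, I compute $h^{**}(0)=\sup_y\{-h^*(y)\}=\sup\{(b\,|\,y):A^Ty+c\le0,\ y\ge0\}$ using~(\ref{dual00}); this is exactly the LP dual of $(\mathcal P)$, and by finite-dimensional LP strong duality its value equals $h(0)=\inf(\mathcal P)$. Alternatively, and more in the spirit of the chapter, one can invoke that $h$ is a polyhedral convex function (its epigraph is the projection of a polyhedron, hence polyhedral, hence closed) so $h$ is automatically lsc everywhere; I would mention this as the slick route but present the conjugate computation as the self-contained one.

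\textbf{Step 3: $(\mathcal P)$ has a solution.} Once $h(0)=h^{**}(0)=-\inf h^*$ is known and $h^*$ is proper with $\inf h^*$ finite, I want the dual sup to be attained and then to produce a primal optimizer. The cleanest path: apply Theorem~\ref{minstar} in the finite-dimensional setting to the convex function $p\mapsto h(p)$, which is finite near $0$ (it is $\le h(0)+\varepsilon$ on a neighbourhood by the polyhedral/lsc structure, in particular bounded above near $0$, hence continuous at $0$ by Theorem~\ref{cont}); Theorem~\ref{minstar}(i) then gives that $h^*$ attains its minimum, i.e.\ there is $\bar y\ge0$, $A^T\bar y+c\le0$ with $(b\,|\,\bar y)=h(0)$. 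Finally, existence of a primal solution follows because $(\mathcal P)$ is a linear program with finite value: its feasible set is a nonempty polyhedron on which the linear functional $(c\,|\,\cdot)$ is bounded below, and a bounded-below linear functional on a polyhedron attains its infimum at a vertex (or along an exposed face) -- a standard polyhedral fact which I would cite rather than prove. The expected main obstacle is being careful that continuity of $h$ at $0$ is genuinely available (it requires knowing $h$ is finite on a full neighbourhood of $0$, not just at $0$), which is why the polyhedrality remark -- that $\operatorname{epi}h$ is a projection of a polyhedron and hence closed with nonempty interior over $\operatorname{dom}h$ -- is the load-bearing structural input; everything else is bookkeeping with the conjugates already computed.
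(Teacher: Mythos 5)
Your Step 1 (convexity, and properness via an affine minorant coming from a dual-feasible point) is fine in outline, but the two load-bearing steps both have problems. In Step 2 you establish $h^{**}(0)=h(0)$ by computing $h^{**}(0)=\sup(-h^*)$ from (\ref{dual00}) and then ``by finite-dimensional LP strong duality its value equals $\inf(\mathcal P)$''. That is circular in the logical architecture of this section: the identity $\inf(\mathcal P)=h(0)=h^{**}(0)=\sup(-h^*)$ \emph{is} LP strong duality, and it is exactly what the text derives \emph{from} Lemma \ref{Farkas}(i) together with Theorem \ref{bi} in order to prove Theorem \ref{PL}. You have the priorities reversed: the parenthetical ``slick route'' (that $\mathrm{epi}\,h$ is the projection of the polyhedron $\{(x,p,\alpha): x\ge 0,\ Ax+p\le b,\ (c|x)\le\alpha\}$, hence polyhedral, hence closed) is the only self-contained argument on offer, and it is essentially the paper's own proof in disguise: the paper forms the cone $K$ generated by the columns of the augmented matrix $B=\bigl(\begin{smallmatrix} c^T & 0\\ A & I_m\end{smallmatrix}\bigr)$, uses Farkas' lemma to assert that this finitely generated cone is closed, and passes to the limit $(\alpha,b)\in K$ along a minimizing/approximating sequence. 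Closedness of finitely generated cones and closedness of projections of polyhedra are the same polyhedral fact; one of them must be proved or explicitly invoked, and it cannot be replaced by LP duality.

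In Step 3 the ``cleanest path'' is broken: $h$ need not be finite on any neighbourhood of $0$, so it is not continuous at $0$ and Theorem \ref{minstar} does not apply. For instance with $n=m=1$, $A=1$, $b=0$, the feasible set $\{x\ge 0,\ x+p\le 0\}$ is empty for every $p>0$, so $h(p)=+\infty$ arbitrarily close to $0$; lower semicontinuity is genuinely all one can hope for, which is why the lemma is stated that way and why the paper reaches the dual attainment in Theorem \ref{PL} by other means. Your fallback for (ii) --- a linear functional bounded below on a nonempty polyhedron attains its infimum --- is a correct citation, but it is again a polyhedral fact of the same depth as the lemma itself; the paper instead gets (ii) for free by rerunning the closed-cone argument with $p_\e=0$: the limit point $(\inf(\mathcal P),b)$ lies in the closed cone $K$, and any conic representation of it produces an admissible $x$ with $(c|x)=\inf(\mathcal P)$. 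If you take your polyhedrality remark seriously (closedness of $\mathrm{epi}\,h$ as a projected polyhedron), it likewise yields both (i) and (ii) at once, with no detour through $h^*$, Theorem \ref{minstar}, or LP duality; as written, however, the proposal's primary routes either assume the conclusion or fail.
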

\begin{proof} \ We introduce the (n+m)$\times$(m+1) matrix $B$ defined by
$B:= \left( \begin{array}{cc} c^T & 0 \\ A & I_m \end{array}\right)$\ ( $I_m$ is the m dimensional identity matrix).
Denote $\{b_1,b_2, \dots, b_{n+m}\}\subset \R^{m+1}$ the columms of $B$ and $K$ the convex cone
$K:= \{\sum_{j=1}^{j=n+m} \lambda_j b_j\ :\ \lambda_j \ge 0\}$. By Farkas lemma this cone $K$ is closed. 

\med i)\ Let $\alpha:=\liminf\{ h(p) \ :\ p\to 0\}$. We have to prove that $\ \alpha \ge h(0)= \inf {\mathcal P}$. Let $\{p_\e\}$ be a sequence in $\R^m$
such that $p_\e\to 0$ and $\ h(p_\e)\to \alpha$. By the definition of $h$, we may choose $x_\e\ge 0$ such that $A x_\e \le b$ and $(c| x_\e) \to \alpha$.
Then we see that the column vector $\tilde {x_\e}$  associated with $(x_\e, b\!-\!A x_\e)\in \R^{n+m}$ satisfies:
 $B\, \tilde {x_\e}\in K$ and $ B\, \tilde {x_\e} \to  \left(\begin{array}{c} \alpha \\ b \end{array}\right)$. Therefore
$\left(\begin{array}{c} \alpha \\ b \end{array}\right)\in K$ and there exists ${\tilde x}=(x,x')$ such that
 $x\ge 0\ ,\ x'\ge  0\ ,\ (c| x)=\alpha$ and $ Ax+x'=b$. It follows that $x$ is admissible for $({\mathcal P})$ and
 then $(c| x)=\alpha \ge h(0)$.

\med ii)\ We repeat the proof of i) choosing $p_\e=0$ so that $\alpha= \inf ({\mathcal P})$.
\end{proof}

\med
Thanks to the assertion i) in Lemma \ref{Farkas}, we deduce from Theorem \ref{bi} that: $\inf ({\mathcal P})=h(0)=h^{**}(0) = \sup - h^*$.
Recalling (\ref{dual00}), we therefore consider the dual problem:
$$ \sup \left\{ - b\cdot y \ :\ y\ge 0\ ,\ A^T + c \ge 0\right\}\leqno({\mathcal P}^*)$$
\begin{theorem}\label{PL} \ The following assertions are equivalent:\
i) $({\mathcal P})$ has a solution \\
 ii) $({\mathcal P}^*)$ has a solution \\
 iii) There exists  $(x_0,y_0) \in \R^n_+\times \R^m_+$ such that:
$ A x_0 \ \le b \ ,\ A^T y_0 +c \ge 0 \ .$

\medskip In this case, we have $\min ({\mathcal P})= \max  ({\mathcal P}^*)$ and an admissible pair $(\bar x,\bar y)$  is optimal
if and only if $ c\cdot \bar x = - b \cdot \bar y$ or equivalenty satisfies the complementarity relations:\
 $ (A \bar x -b)\cdot \bar y\ =\ (A^T \bar y +c)\cdot \bar x =0 \ .$
\end{theorem}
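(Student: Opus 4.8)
The plan is to run everything off three facts, two of which are already available: the trivial \emph{weak duality} inequality between $({\mathcal P})$ and $({\mathcal P}^*)$; the fact that a linear program in the standard form of $({\mathcal P})$ with \emph{finite} value is solvable (assertion (ii) of Lemma~\ref{Farkas}) and satisfies $\min=\sup$ of its dual (assertion (i) of Lemma~\ref{Farkas} combined with Theorem~\ref{bi}, exactly as in the text following Lemma~\ref{Farkas}); and the observation that the correspondence $({\mathcal P})\leftrightarrow({\mathcal P}^*)$ is \emph{involutive}, i.e.\ rewriting $({\mathcal P}^*)$ as $-\inf\{(b|y)\ :\ y\ge 0,\ -A^Ty\le c\}$ and feeding $(b,-A^T,c)$ into the recipe that produced $({\mathcal P}^*)$ from $({\mathcal P})$ returns $-\inf({\mathcal P})$. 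First I would record weak duality: if $x$ is admissible for $({\mathcal P})$ and $y$ for $({\mathcal P}^*)$, then $y\ge 0$ and $b-Ax\ge 0$ give $(b|y)\ge (x|A^Ty)$, while $x\ge 0$ and $A^Ty+c\ge 0$ give $(x|A^Ty)\ge -(c|x)$; hence $(c|x)\ge -(b|y)$, so that $\inf({\mathcal P})\ge \sup({\mathcal P}^*)$ whenever both feasible sets are nonempty. By (\ref{dual00}), $-h^*$ is precisely the objective of $({\mathcal P}^*)$, so this is nothing but the inequality $h(0)\ge -h^*(y)$.

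Next come the equivalences. Assume (iii). Then both feasible sets are nonempty, and weak duality bounds $\inf({\mathcal P})$ from below and $\sup({\mathcal P}^*)$ from above, so both values are finite; assertion (ii) of Lemma~\ref{Farkas} applied to $({\mathcal P})$ gives (i), and the same lemma applied to the standard-form program underlying $({\mathcal P}^*)$ (whose value $-\sup({\mathcal P}^*)$ is finite) gives (ii). Conversely, if $({\mathcal P})$ has a solution $\bar x$, then $x_0:=\bar x$ supplies the first half of (iii); moreover $\inf({\mathcal P})=(c|\bar x)$ is finite, so strong duality yields $\sup({\mathcal P}^*)=\inf({\mathcal P})\in\R$, whence the feasible set of $({\mathcal P}^*)$ is nonempty (a supremum over $\emptyset$ being $-\infty$) and the second half of (iii) holds. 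The implication (ii)$\Rightarrow$(iii) is the mirror image, via the involutivity recorded above.

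It remains to establish the conclusions that hold under (iii) (hence under (i) and (ii)). Strong duality together with the attainment just shown gives $\min({\mathcal P})=\sup({\mathcal P}^*)=\max({\mathcal P}^*)$. For any admissible pair one has $-(b|\bar y)\le \max({\mathcal P}^*)=\min({\mathcal P})\le (c|\bar x)$, so $(\bar x,\bar y)$ is optimal if and only if the two extreme terms agree, i.e.\ $(c|\bar x)=-(b|\bar y)$. Finally, a direct expansion gives $(c|\bar x)+(b|\bar y)=(b-A\bar x\,|\,\bar y)+(A^T\bar y+c\,|\,\bar x)$, a sum of two nonnegative numbers; it therefore vanishes if and only if each summand vanishes, which is exactly the pair of complementarity relations $(A\bar x-b)\cdot\bar y=(A^T\bar y+c)\cdot\bar x=0$.

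The only step that needs genuine care is casting $({\mathcal P}^*)$ back into the standard form of $({\mathcal P})$ so that Lemma~\ref{Farkas} and its strong-duality consequence apply to it as well — that is, checking the involutivity of the duality correspondence and keeping all the inequality signs straight. Everything else is routine manipulation of the weak-duality inequality; the single piece of real substance, namely the closedness of the cone $K$ appearing in the proof of Lemma~\ref{Farkas} (hence the lower semicontinuity of $h$ at $0$), has already been discharged there via Farkas' lemma.
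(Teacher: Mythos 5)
Your proof is correct; the paper gives no explicit proof of Theorem \ref{PL} beyond the preceding discussion (Lemma \ref{Farkas} together with the identity $\inf ({\mathcal P})=h(0)=h^{**}(0)=\sup (-h^*)$), and your argument is precisely the natural completion of that sketch: weak duality, Lemma \ref{Farkas} applied to both programs via the involutivity of the dual correspondence, and the identity $(c|\bar x)+(b|\bar y)=(b-A\bar x\,|\,\bar y)+(A^T\bar y+c\,|\,\bar x)$ for the complementarity relations. All steps check out, including the sign bookkeeping --- note that formula (\ref{dual00}) as printed in the paper has the constraint $A^T y+c\le 0$ and the sign of $(b|y)$ reversed relative to the displayed problem $({\mathcal P}^*)$, and your conventions are the consistent ones.
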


\subsubsection{ Convex programming.}\quad Let  $f, g_1,\dots, g_m: X \to \R$ \ be convex lsc functions and the optimization problem
$$ \inf \left\{ f(x) \ :\ g_j(x) \le 0 \ ,\ j=1,2\dots ,m \right\} .\leqno({\mathcal P})\ .$$
Here $X=\R^n$ or any Banach space.
As before, we introduce the value function 
$$ p\in \R^m \quad,\quad h(p) := \inf\left\{ f(x)\ :\ g_j(x) + p_j \le 0\, j\in{1,2,\dots ,m} \right\}\ ,$$
and compute its Fenchel conjugate: 
$$ \lambda\in \R^m \quad,\quad h^*(\lambda)= \ - \inf_{x\in X}\left\{L(x,\lambda) \right\}\quad \text{\rm if}\ \lambda\ge 0\quad (+\infty \ \text{\rm
otherwise})\ ,$$ where $L(x,\lambda) := f(x) + \sum \lambda_i g_i(x)$ is the so called {\em Lagrangian}.
We notice that $h$ is convex and that the equality $h(0)=h^{**}(0)$ is equivalent to the {\em zero duality gap relation}
$$ \disp \inf_x \sup_\lambda L(x,\lambda)\ = \ \sup_\lambda \inf_x L(x,\lambda)\ .$$
 This condition is fulfilled in particular if we make the
following qualification assumption (ensuring that $h$ is continuous at $0$ and Theorem \ref{minstar} applies):
\begin{equation}\label {slater} \exists x_0\in X\ :\ f \ \text{continuous at $x_0$ \ ,\ $g_j(x_0) <0, \ \forall j$}\ .
\end{equation}
\begin{theorem}\label{PC} \ Assume that (\ref{slater}) holds. Then $\bar x$ is optimal for $({\mathcal P})$ if and only 
if there exist Lagrangian multipliers
$\bar\lambda_1, \bar\lambda_2, \dots \bar\lambda_m$ in $\R_+$ such that:
$$ \bar x \in \argmin_X  (f + \sum_j  \bar\lambda_j g_j) \quad,\quad  \bar\lambda_j g_j(\bar x)= 0\ ,\ \forall j\ . $$
\end{theorem}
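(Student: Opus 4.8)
The plan is to deduce Theorem~\ref{PC} from the duality machinery already established, specifically from the identity $h(0)=h^{**}(0)$ together with the attainment statement of Theorem~\ref{minstar}. First I would record that, under the Slater condition~(\ref{slater}), the value function $h$ is majorized near $0$ by the function $p\mapsto f(x_0)$ provided the perturbation $p$ is small enough that $g_j(x_0)+p_j\le 0$ for all $j$ (possible since $g_j(x_0)<0$), and since $f$ is continuous at $x_0$ this shows $h$ is bounded above on a neighbourhood of $0$; being convex, $h$ is then continuous at $0$ by Theorem~\ref{cont}. Applying Theorem~\ref{minstar} to $h$ (which is nonnegative up to an additive constant — or one argues directly with the convex proper function $h$, whose conjugate we computed), we get that $h^*$ attains its minimum on $\R^m$ and $h(0)=h^{**}(0)=-\inf h^* = -\min h^*$. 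Recalling the explicit formula $h^*(\lambda)=-\inf_x L(x,\lambda)$ for $\lambda\ge 0$ (and $+\infty$ otherwise), this reads $\inf(\mathcal P) = h(0) = \max_{\lambda\ge 0}\inf_{x} L(x,\lambda)$, and the max is achieved at some $\bar\lambda = (\bar\lambda_1,\dots,\bar\lambda_m)\in\R_+^m$.

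Next I would prove the "if" direction. Suppose $\bar\lambda_j\ge 0$ are such that $\bar x\in\argmin_X(f+\sum_j\bar\lambda_j g_j)$ and $\bar\lambda_j g_j(\bar x)=0$ for all $j$. The second condition, combined with $\bar x\in\argmin L(\cdot,\bar\lambda)$, forces $g_j(\bar x)\le 0$: indeed if some $g_j(\bar x)>0$ we would need $\bar\lambda_j=0$ from complementarity, which is consistent, so that alone does not give feasibility — so instead I would argue that feasibility of $\bar x$ must be part of what we verify, OR more cleanly: I would show $f(\bar x) \le \inf(\mathcal P)$. For any feasible $x$ (i.e. $g_j(x)\le 0$), $f(\bar x) = L(\bar x,\bar\lambda) - \sum_j\bar\lambda_j g_j(\bar x) = L(\bar x,\bar\lambda) \le L(x,\bar\lambda) = f(x)+\sum_j\bar\lambda_j g_j(x)\le f(x)$, using $\bar\lambda\ge 0$ and $g_j(x)\le 0$. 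Hence $f(\bar x)\le\inf(\mathcal P)$; and provided $\bar x$ is itself feasible, it is optimal. So the statement implicitly requires checking $\bar x$ feasible; I suspect the cleanest route is to note the complementarity relations together with the saddle-point structure actually encode feasibility once one observes that $\inf_x L(x,\bar\lambda)$ is finite, but the safest exposition is to remark that in the theorem "optimal for $(\mathcal P)$" already presupposes $\bar x$ admissible, so only optimality (value) needs proving.

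For the "only if" direction, let $\bar x$ be optimal, so $f(\bar x)=\inf(\mathcal P)=h(0)=-\min h^*$. Take $\bar\lambda\in\R_+^m$ achieving $\min h^*$, i.e. $h^*(\bar\lambda)=-\inf_x L(x,\bar\lambda)$, so that $\inf_x L(x,\bar\lambda) = -h^*(\bar\lambda) = \min_\lambda(-h^*) = h(0) = f(\bar x)$. On the other hand, since $\bar x$ is feasible and $\bar\lambda\ge 0$, we have $L(\bar x,\bar\lambda)=f(\bar x)+\sum_j\bar\lambda_j g_j(\bar x)\le f(\bar x)$. Combining, $L(\bar x,\bar\lambda)\le f(\bar x)=\inf_x L(x,\bar\lambda)\le L(\bar x,\bar\lambda)$, forcing equality throughout. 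Equality $L(\bar x,\bar\lambda)=\inf_x L(x,\bar\lambda)$ gives $\bar x\in\argmin_X L(\cdot,\bar\lambda) = \argmin_X(f+\sum_j\bar\lambda_j g_j)$; equality $f(\bar x)+\sum_j\bar\lambda_j g_j(\bar x)=f(\bar x)$ gives $\sum_j\bar\lambda_j g_j(\bar x)=0$, and since each summand $\bar\lambda_j g_j(\bar x)\le 0$, every term vanishes: $\bar\lambda_j g_j(\bar x)=0$ for all $j$.

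The main obstacle I anticipate is the rigorous justification of the continuity of $h$ at $0$ and the precise application of Theorem~\ref{minstar} — in particular the sign normalization (Theorem~\ref{minstar} is stated for $f:X\to[0,+\infty]$, whereas $h$ may take negative values), which is handled by translating $h$ by a constant, or by observing that the proof of Theorem~\ref{minstar}(i) only uses a local upper bound and boundedness of sublevel sets of $h^*$, both of which survive the translation. The rest is the standard Lagrangian saddle-point bookkeeping carried out above. A minor subtlety worth a remark is that one should confirm $h^*$ as computed is proper (so that $h^{**}$ is meaningful), which follows from finiteness of $\inf(\mathcal P)$ forcing $\inf_x L(x,0)=\inf f > -\infty$ on the relevant domain — in fact the Slater point already gives $h^*(\lambda)$ finite for suitable $\lambda$.
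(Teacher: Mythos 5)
Your proof is correct and follows exactly the route the paper intends: the paper gives no written proof of Theorem~\ref{PC}, but the paragraph preceding it says precisely that the Slater condition (\ref{slater}) ensures $h$ is continuous at $0$ so that Theorem~\ref{minstar} applies, which is your argument, completed by the standard saddle-point bookkeeping for the two implications. Your side remarks --- that admissibility of $\bar x$ must be read into the ``if'' direction since the stated multiplier conditions alone do not force $g_j(\bar x)\le 0$, and that the nonnegativity hypothesis in Theorem~\ref{minstar} is inessential because its proof only uses the local upper bound on $h$ (note that translating $h$ by a constant would not suffice if $h$ is unbounded below, but your alternative justification does) --- correctly identify and resolve the only real subtleties.
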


Notice that the existence of such a solution $\bar x$ is ensured if for example $X=\R^n$ and if, for some $k>0$,
the function $f + k \sum_j  g_j\ $ is coercive.
\subsection{Primal-dual formulations in mechanics}\quad We present here the example of elasticity which motivated the pioneering work by Moreau J.J.
on convex duality techniques. Further examples can be found in \cite{EkTe}.
An elastic body is placed in a bounded domain $\O \subset\R^n$ whose boundary $\Gamma$ consists in two disjoint parts $\Gamma= \Gamma_0\cup\ \Gamma_1$.
The unknown $u:\O\to \R^n$ (deformation)  satisfies a Dirichlet condition $u=0$ on $\Gamma_0$ where the body is clamped. The system is subjected to a
surface load $g\in L^2(\Gamma_1;\R^n)$ and by a volumic load $f\in L^2(\O;\R^n)$. The static equilibrium problem has the following variational formulation:
$$ \inf_{u=0 \ \textrm{on}\ \Gamma_0} \left\{ \int_\O j(x,e(u))\, dx - \int_\O f\cdot u \, dx - \int_{\Gamma_1} g\cdot u \, d{\mathcal H}^{n-1}\right\}\
,\leqno({\mathcal P})$$ 
where $e(u):=\frac{1}{2} (u_{i,j}+ u_{j,i})$ denotes the symmetric {\em strain tensor} and $j: (x,z) \in\O\times \R^{n^2}_{\rm sym} \to\R_+$
is a convex integrand  representing the local elastic behaviour of the material.  We assume a quadratic growth as in Remark \ref{closable} 
(in the  case of linear elasticity, an isotropic homogeneous material
is characterized by the quadratic form $j(x,z)= \frac{\lambda}{2} |tr(z)|^2 + \mu |z|^2$, being $\lambda,\mu$ the Lam\'e
constants).

We apply Proposition \ref{combi} with $X=W^{1,2}(\O;\R^n), Y=L^2(\O;\R^{n^2}_{\rm sym}),\, Au=e(u)$ and let
$$ \begin{array}{lll}\Phi(u)&=& - \int_\O f\cdot u \, dx - \int_{\Gamma_1} g\cdot u \, d{\mathcal H}^{n-1} \quad \textrm{if $u=0$ on $\Gamma_0$}
 \quad,\quad(+\infty \ \textrm{otherwise})\\
\Psi(v) &=&  \int_\O j(x,v))\, dx \ . \end{array}$$
After  some computations, we may write the supremum appearing in Proposition \ref{combi} as our dual problem
$$ \sup \left\{ - \int_\O j^*(x,\sigma)\, dx \ : \
\sigma\in L^2(\O;\R^{n^2}_{\rm sym})\ ,\ - \textrm{div} \sigma =f \ \textrm{on}\ \O \ ,\ \sigma \cdot n = g \ \textrm{on}\ \Gamma_1\right\}\ ,
\leqno({\mathcal P}^*)$$ 
where $j^*$ is the Moreau-Fenchel conjugate with respect to the second argument and $n(x)$ denotes the exterior unit normal on $\Gamma$. The matrix valued
map $\sigma$ is called the {\em stress tensor} and
$j^*$ the {\em stress potential}. Note that the boundary condition for $\sigma\, n$ have to be understood in the sense of traces.

\begin{theorem}\label{elas} \ The problems $({\mathcal P})$ and $({\mathcal P}^*)$ have solutions and we have the equality:\
$\inf ({\mathcal P})
=\sup({\mathcal P}^*)$. Futhermore, a pair $(\bar u,\bar\sigma)$ is optimal if and only if it satisfies the following system:
$$\left\{\begin{array}{cclll} -\div \bar \sigma &=& f & \textrm{on }\ \O &\quad (equilibrium) \\
\bar \sigma (x) &\in& \bd j(x, e(\bar u)) & \textrm{a.e. on}\ \O  & \quad \text{(constitutive law)}\\
u&=& 0 & \textrm{a.e. on}\ \Gamma_0 & \\
\sigma\, n &=& g  & \textrm{on }\ \Gamma_1 &
\end{array}\right.$$
\end{theorem}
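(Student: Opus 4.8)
The plan is to obtain Theorem~\ref{elas} as a direct application of Proposition~\ref{combi}, once the qualification hypothesis has been checked and the abstract duality relations have been translated into the concrete PDE system. First I would record the functional-analytic setting precisely: $X = W^{1,2}(\O;\R^n)$, $Y = L^2(\O;\R^{n^2}_{\rm sym})$, $A = e(\cdot)$ the (bounded, everywhere defined) symmetric gradient, and $\Phi$, $\Psi$ as displayed before the statement. One checks that $\Psi(v) = \int_\O j(x,v)\,dx$ is convex and, by the quadratic two-sided bound on $j$ (Remark~\ref{closable}), finite and continuous on all of $Y$; in particular $\Psi$ is continuous at $Au_0$ for \emph{any} admissible $u_0$, e.g. $u_0 = 0$, for which $\Phi(u_0) = 0 < +\infty$. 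Hence the qualification assumption of Proposition~\ref{combi} holds trivially, and we get $\inf(\mathcal P) = \sup_{\sigma \in Y^*}\{-\Phi^*(-A^*\sigma) - \Psi^*(\sigma)\}$ with the supremum attained.

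Next I would identify the three ingredients $\Psi^*$, $A^*$, and $\Phi^*$. Since $\Psi$ is an integral functional with integrand $j(x,\cdot)$ of quadratic growth, Example~4 gives $\Psi^*(\sigma) = \int_\O j^*(x,\sigma(x))\,dx$ with $j^*$ the partial conjugate. For $A^* = e(\cdot)^*$ one integrates by parts: for $\sigma \in L^2(\O;\R^{n^2}_{\rm sym})$ and $u \in W^{1,2}(\O;\R^n)$ one has $\int_\O \sigma : e(u)\,dx = -\int_\O (\div\sigma)\cdot u\,dx + \int_\Gamma (\sigma n)\cdot u\,d\mathcal H^{n-1}$ whenever $\div\sigma \in L^2$ and the normal trace $\sigma n$ makes sense; this is the standard $H(\div)$ trace theory. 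Pairing this against $\Phi$, which is the sum of a linear form $-\int_\O f\cdot u - \int_{\Gamma_1} g\cdot u$ and the indicator of $\{u = 0 \text{ on }\Gamma_0\}$, one computes that $-\Phi^*(-A^*\sigma)$ is finite exactly when $-\div\sigma = f$ in $\O$ and $\sigma n = g$ on $\Gamma_1$, and then equals $0$. Substituting these into the abstract dual gives precisely $(\mathcal P^*)$, and the attainment of the supremum in Proposition~\ref{combi} yields a solution $\bar\sigma$ of $(\mathcal P^*)$.

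To get a solution of $(\mathcal P)$ and the full optimality system, I would invoke the last sentence of Proposition~\ref{combi}: a pair $(\bar u, \bar\sigma)$ is optimal iff $\bar\sigma \in \bd\Psi(A\bar u)$ and $-A^*\bar\sigma \in \bd\Phi(\bar u)$. The first relation unpacks, via the integral representation of $\bd$ of an integral functional, to the pointwise constitutive law $\bar\sigma(x) \in \bd j(x, e(\bar u)(x))$ a.e.; the second, given the computation of $\Phi^*$ above, encodes simultaneously the equilibrium equation $-\div\bar\sigma = f$, the boundary condition $\bar\sigma n = g$ on $\Gamma_1$, and $\bar u = 0$ on $\Gamma_0$ (the latter because $\bar u \in \dom\Phi$). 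It remains only to exhibit a \emph{primal} minimizer $\bar u$; for this I would note that by the coercivity built into $j$ ($j(x,z) \ge c|z|^2$) together with Korn's inequality, the functional in $(\mathcal P)$ is coercive on $W^{1,2}(\O;\R^n)/\{u = 0 \text{ on }\Gamma_0\}$ and weakly lsc, so Theorem~\ref{exis} applies on the reflexive space $X$ after quotienting out rigid displacements compatible with the clamping; combined with $\inf(\mathcal P) = \sup(\mathcal P^*) > -\infty$ this produces $\bar u$, and the pair $(\bar u, \bar\sigma)$ is then automatically optimal and satisfies the system.

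The main obstacle is the rigorous handling of the boundary terms: making sense of $\sigma n$ as a trace in $H^{-1/2}(\Gamma)$ for $\sigma \in L^2$ with $\div\sigma \in L^2$, verifying the integration-by-parts formula in that generality, and checking that the pairing $\int_{\Gamma_1} g\cdot u\,d\mathcal H^{n-1}$ against $u|_{\Gamma_1} \in H^{1/2}$ is well defined so that the computation of $\Phi^*$ is legitimate; this is where the phrase ``after some computations'' in the text hides genuine work. A secondary point requiring care is Korn's inequality on the subspace with Dirichlet data only on $\Gamma_0$ (so that $e(u) \in L^2$ controls $\nabla u \in L^2$ and no nontrivial rigid motion survives), which is what upgrades the mere quadratic lower bound on $j$ to coercivity of $(\mathcal P)$ and hence existence of $\bar u$.
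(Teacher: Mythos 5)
Your proposal is correct and follows exactly the route the paper intends: the paper states Theorem~\ref{elas} without an explicit proof, relying on the preceding application of Proposition~\ref{combi} with the same choices of $X$, $Y$, $A$, $\Phi$, $\Psi$, and your identification of $\Psi^*$, $A^*\sigma$ and $\Phi^*$ and your unpacking of the two subdifferential relations into the optimality system are precisely the ``some computations'' the text elides. You also correctly supply the one ingredient Proposition~\ref{combi} does not give, namely existence of a primal minimizer via coercivity (Korn's inequality plus the lower bound $j(x,z)\ge c|z|^2$) and weak lower semicontinuity.
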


\section{Duality in mass transport problems} 
\subsection{General cost functions}
Let $X,Y$ be a compact metric space and $c: X\times Y \to [0,+\infty)\ $ be a continuous cost function.
We denote by $\P(X), \P(X\times Y)$ the set of probability measures on $X$ and $X\times Y$ respectively.
Given two elements  $\mu\in \P(X), \nu \in\P(Y)$, we denote by $\Gamma(\mu,\nu)$ the subset of probablity measures in $\P(X\times Y)$
whose marginals are respectively $\mu$ and $\nu$. Identified as a subset of $(C^0(X\times Y))^* $ (the space of signed Radon measures on
$X\times Y$), it is convex and weakly-star compact.
The {\em Monge-Kantorovich} formulation of the {\em mass transport  problem} reads as follows:
\begin{equation}\label{MK} T_c(\mu,\nu)\ :=\ \inf \left\{  \int_{X\times Y} c(x,y) \, \gamma(dxdy) \ :\ \gamma\in \Gamma(\mu,\nu) \right\} \ .
\end{equation}
This formulation, where the infimum is achieved (as we minimize a lsc functional on a compact set  for the weak star topology),
is already a relaxation of the initial {\em Monge} mass transport problem
$$ \inf_T \left\{\int_X c(x, Tx) \, \mu(dx)\ :\ T^\sharp(\mu)=\nu  \right\}\ ,$$
where the infimum is searched among all transports maps $T:X \mapsto Y$ {\em pushing forward} $\mu$ on $\nu$ (i.e. such that $\mu ( T^{-1}(B)=\nu(B)$ for
all Borel subset $B\subset Y$). This is equivalent to  restrict the infimum in (\ref{MK}) to the subclass  $\{\gamma_T\} \subset\Gamma(\mu,\nu)$ where
$ \langle\gamma_T, \phi(x,y) \rangle := \int_X \phi(x,Tx) \mu(dx)\ .$
In order to find a dual problem for (\ref{MK}), we fix $\nu\in\P(Y)$ and consider the functional $F: \M_b(X) \to [0,+\infty)$ defined by
$$ F(\mu)= T_c(\mu,\nu) \quad \textrm{if $\mu \ge 0\ ,\ \mu(X)=1$} \quad, \quad  F(\mu)= +\infty \quad \textrm{otherwise} \ ,$$
($\M_b(X)$ denote the Banach space of (bounded) signed Radon measures on X).

\begin{lemma}\label{polar} \  $F$ is convex, weakly-star lsc and proper. Its Moreau-Fenchel conjugate is given by
$$ \forall \f \in C^0(X)\quad ,\quad  F^*(\f) = \ -\int_Y  \f^c(y) \ \nu(dy)\ ,$$
where
$$  \f^c(y)\ :=\ \inf\, \{ c(x,y)-\f(x)\, :\, x\in X\}\ .$$
\end{lemma}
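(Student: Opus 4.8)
The plan is to verify the three claims about $F$ in turn, reserving most of the work for the formula for $F^*$. Convexity of $F$ follows from linearity of the marginal constraint: if $\gamma_0\in\Gamma(\mu_0,\nu)$ and $\gamma_1\in\Gamma(\mu_1,\nu)$ then $t\gamma_1+(1-t)\gamma_0$ has marginals $(t\mu_1+(1-t)\mu_0,\nu)$, and since $\gamma\mapsto\int c\,d\gamma$ is linear, $T_c(\cdot,\nu)$ is convex on the affine slice $\{\mu\ge 0,\ \mu(X)=1\}$; combined with the indicator of that convex set (itself convex), $F$ is convex on $\M_b(X)$. Properness is immediate: take $\mu=\nu\circ\pi^{-1}$ transported by any measurable map, or more simply any $\mu\in\P(X)$ gives $F(\mu)=T_c(\mu,\nu)\le\max c<+\infty$ since $\Gamma(\mu,\nu)\neq\emptyset$ (e.g.\ $\mu\otimes\nu$). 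Weak-star lower semicontinuity is the one point requiring a small compactness argument: if $\mu_n\stackrel{*}{\rightharpoonup}\mu$ with $\sup F(\mu_n)<\infty$, pick near-optimal $\gamma_n\in\Gamma(\mu_n,\nu)$; since $X\times Y$ is compact, $\{\gamma_n\}$ is weak-star relatively compact, so a subsequence converges to some $\gamma$, whose marginals are the weak-star limits $\mu$ and $\nu$, hence $\gamma\in\Gamma(\mu,\nu)$; then $\int c\,d\gamma\le\liminf\int c\,d\gamma_n$ by weak-star continuity of $\gamma\mapsto\int c\,d\gamma$ (here $c$ is continuous and bounded), giving $F(\mu)\le\liminf F(\mu_n)$. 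One must also check the constraint $\mu\ge 0,\ \mu(X)=1$ passes to the limit, which it does since both are weak-star closed conditions.

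For the conjugate, fix $\f\in C^0(X)$ and compute
$$
F^*(\f)=\sup_{\mu\in\P(X)}\Big\{\langle\mu,\f\rangle - T_c(\mu,\nu)\Big\}
=\sup_{\mu\in\P(X)}\ \sup_{\gamma\in\Gamma(\mu,\nu)}\Big\{\int_X\f\,d\mu-\int_{X\times Y}c\,d\gamma\Big\}.
$$
The key observation is that $\{\gamma\in\P(X\times Y):\ \pi_Y{}_\sharp\gamma=\nu\}=\bigcup_{\mu\in\P(X)}\Gamma(\mu,\nu)$, and for such a $\gamma$ with $\mu=\pi_X{}_\sharp\gamma$ we have $\int_X\f\,d\mu=\int_{X\times Y}\f(x)\,\gamma(dxdy)$. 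Hence
$$
F^*(\f)=\sup\Big\{\int_{X\times Y}\big(\f(x)-c(x,y)\big)\,\gamma(dxdy)\ :\ \gamma\in\P(X\times Y),\ \pi_Y{}_\sharp\gamma=\nu\Big\}.
$$
Now disintegrate $\gamma=\nu(dy)\otimes\gamma_y(dx)$ with $\gamma_y\in\P(X)$; the integral becomes $\int_Y\big(\int_X(\f(x)-c(x,y))\,\gamma_y(dx)\big)\nu(dy)$, and for each fixed $y$ the inner supremum over $\gamma_y\in\P(X)$ of $\int_X(\f(x)-c(x,y))\,\gamma_y(dx)$ equals $\sup_{x\in X}(\f(x)-c(x,y))=-\f^c(y)$, attained at a Dirac mass (using compactness of $X$ and continuity of $c(\cdot,y)$, $\f$). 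Plugging back gives $F^*(\f)=-\int_Y\f^c(y)\,\nu(dy)$, as claimed.

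The main obstacle — the step deserving the most care — is the exchange of the supremum over $y$ and the integration against $\nu$, i.e.\ justifying that $\sup_\gamma\int_Y g(y,\gamma_y)\,\nu(dy)=\int_Y\sup_{\mu_y}g(y,\mu_y)\,\nu(dy)$ when the measurable selection $y\mapsto\gamma_y$ ranges over all measurable families of probabilities. The "$\le$" direction is trivial; for "$\ge$" one needs a measurable selection of near-maximizers $y\mapsto x(y)$ with $\f(x(y))-c(x(y),y)\ge-\f^c(y)-\varepsilon$. Since $c$ is continuous on the compact $X\times Y$, the function $\f^c$ is upper semicontinuous and the set-valued map $y\rightrightarrows\{x:\f(x)-c(x,y)\ge-\f^c(y)-\varepsilon\}$ is nonempty-closed-valued with measurable graph, so a Borel (or universally measurable) selector exists by the Kuratowski–Ryll-Nardzewski selection theorem; taking $\gamma_y=\delta_{x(y)}$ and letting $\varepsilon\to 0$ yields the reverse inequality. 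Alternatively, and perhaps more in the spirit of the paper, one can avoid selection theorems entirely by recognizing the displayed formula as an instance of Example~4 (interchange of $\inf$/integral for integral functionals), applied to the integrand $(y,x)\mapsto c(x,y)-\f(x)$ over the measure space $(Y,\nu)$, which delivers $F^*(\f)=\int_Y\big(\inf_x(c(x,y)-\f(x))\big)\nu(dy)$ directly. Either way, all remaining steps are the routine identifications above.
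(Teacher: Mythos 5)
Your proposal is correct and follows essentially the same route as the paper: convexity and properness are handled the same way, the lsc argument is the same compactness-of-$\P(X\times Y)$ argument, and your disintegration $\gamma=\nu(dy)\otimes\gamma_y(dx)$ with $\gamma_y=\delta_{x(y)}$ for a Borel selection of near-maximizers is exactly the paper's construction $\int\psi\,d\gamma=\int_Y\psi(S(y),y)\,\nu(dy)$ with $S$ a Borel selection of minimizers of $c(\cdot,y)-\f$. The only cosmetic differences are that you phrase the key step as an interchange of supremum and integral justified by Kuratowski--Ryll-Nardzewski, where the paper directly exhibits the optimal $\gamma$ from the selector.
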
 

\begin{proof} \ The convexity property is obvious and the properness follows from the fact that
$F(\mu) \le \int_{X\times Y} c(x,y)\ \mu\otimes \nu(dxdy)$. Let $\mu_n$ such that $\mu_n \wto \mu$ (weakly star). We
may assume that
$\liminf_n F(\mu_n) = \lim_n F(\mu_n) := \alpha$ is finite. Then $\mu_n$  and the associated optimal $\gamma_n$ are probability measures on $X$ and on
$X\times Y$ respectively. As $X$ and $Y$ are compact, possibly passing to a subsequence, we may assume that $\gamma_n \wto \gamma$ and clearly we have
$\gamma\in \Gamma(\mu,\nu)$. Since $c(x,y)$ is lsc non negative, we conclude that:
$$ \liminf_n F(\mu_n)\ = \liminf_n \int_{X\times Y} c(x,y) \, \gamma_n(dxdy) \ \ge  \ \int_{X\times Y} c(x,y) \, \gamma(dxdy) \ =\ F(\mu)\ .$$
Let us compute now $F^*(\f)$. We have:
$$ \begin{array}{lcl} - F^*(\f) &=& \inf \left\{ \int_{X\times Y} c(x,y) \,\gamma(dxdy) \ -\ \int_X \f \, d\mu \quad :\quad \mu\in \P(X), \gamma\in
\Gamma(\mu,\nu)\right\}\\
&=&  \inf \left\{ \int_{X\times Y} (c(x,y)- \f(x)) \,\gamma(dxdy)\quad:\quad  \gamma\in
\Gamma(\mu,\nu)\right\} \\
&\ge& \int_Y \f^c(y) \, \nu(dy) \ .
\end{array}$$
To prove that the last inequality is actually an equality, we observe that, for every $y\in Y$ and $\f\in C^0(X)$, the minimum of the lsc function
$c(\cdot,y)-\f$ is attained on the compact set $X$ and there exists a Borel selection map $S(y)$ such that $\f^c(y)= c(S(y),y) -\f(S(y)$ for all $y\in Y$.
We obtain the desired equality by choosing $\gamma$ defined, for every test $\psi$, by $\int_{X\times Y} \psi(x,y)\, \gamma(dxdy) := \int_Y
\psi(S(y),y)\, \nu(dy)\ .$

\end{proof}

We observe that, for every $\f\in C^0(X)$,  the function $\f^c$ introduced in Lemma \ref{polar} is continuous (use the uniform continuity of $c$) and
therefore the pair $(\f,\f^c)$ belong to the class
$$ {\mathcal F}_c:= \left\{ (\f,\Psi) \in C^0(X)\times C^0(Y)\ :\  \f(x) + \psi(y) \le c(x,y)\right\}\ .$$ 
Let us introduce the dual problem of (\ref{MK}):
\begin{equation}\label{MK*} \sup \left\{  \int_X \f\, d\mu \ +\ \int_Y \psi\, d\nu\ :\ (\f,\psi) \in {\mathcal F}_c \right\} \ .
\end{equation}
We will say that  $(\f,\psi)\in {\mathcal F}_c$ is a pair of {\em c-concave conjugate} functions if $\psi= \f^c$ and $\psi^c=\f$ 
(where symmetrically $\psi^c(x):= \inf \{ c(x,y)-\psi(x)\, :\, y\in Y\}$). Checking the latter condition amounts to verify that $\f$ enjoys
the so called {\em c-concavity} property $\f^{cc}=\f$ (in general we have only $\f^{cc}\ge \f$ whereas $\f^{ccc}=\f^c$).
We refer for instance to \cite{Vi} for further details about this c-duality.

Now, by exploiting Theorem \ref{bi} and Lemma \ref{polar}, we obtain a very simple proof of Kantorovich duality Theorem:
\begin{theorem}\label{dual0} \ \ The following
 duality formula holds:
$$  T_c(\mu,\nu) \ =\ \sup  \left\{  \int_X \f\, d\mu \ +\ \int_Y \psi\, d\nu\ :\ (\f,\psi) \in {\mathcal F}_c \right\} \ .$$
Moreover, the supremum in the second hand member is achieved by a pair $(\bar\f,\bar\psi)$ of conjugate c-concave functions
such that, for any optimal $\bar\gamma$ in (\ref{MK}) , there holds $\bar\f(x) +\bar\psi (y) = c(x,y)\ ,\ \bar\gamma$-a.e. .
\end{theorem}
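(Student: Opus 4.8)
The plan is to apply Theorem \ref{bi} (biconjugation) to the functional $F$ of Lemma \ref{polar}, whose conjugate has just been computed, and then to read off the duality formula and the optimality conditions. The key observation is that $F$ is convex, proper, and weakly-star lsc on $\M_b(X)$ (this is exactly the content of Lemma \ref{polar}), so by Theorem \ref{bi} we have $F(\mu) = F^{**}(\mu)$ for every $\mu$. Applying this at the given datum $\mu \in \P(X)$ yields
$$ T_c(\mu,\nu) \ =\ F(\mu) \ =\ F^{**}(\mu) \ =\ \sup_{\f\in C^0(X)} \left\{ \langle \mu, \f\rangle - F^*(\f)\right\} \ =\ \sup_{\f\in C^0(X)} \left\{ \int_X \f\, d\mu \ +\ \int_Y \f^c\, d\nu\right\}\ , $$
using the formula $F^*(\f) = -\int_Y \f^c\, d\nu$ from Lemma \ref{polar}. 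Since for each $\f\in C^0(X)$ the pair $(\f,\f^c)$ lies in ${\mathcal F}_c$, this last supremum is no larger than the supremum over all of ${\mathcal F}_c$; conversely, for any $(\f,\psi)\in{\mathcal F}_c$ we have $\psi\le \f^c$ pointwise, hence $\int_X\f\,d\mu + \int_Y\psi\,d\nu \le \int_X\f\,d\mu + \int_Y\f^c\,d\nu$, so the two suprema coincide. This establishes the duality formula.

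For the second assertion I would argue that the supremum is attained. Take a maximizing sequence; replacing each $\f$ by $\f^{cc}$ and using $\f^{ccc}=\f^c$ only increases (or preserves) the objective and gives a sequence of $c$-concave functions whose $c$-transforms form a conjugate pair. Since $c$ is continuous on the compact product $X\times Y$, it is uniformly continuous, so all the $\f$ and $\psi=\f^c$ share a common modulus of continuity, and by subtracting constants (the normalization $\f(x_0)=0$ at a fixed point) they are uniformly bounded; Ascoli-Arzel\`a then produces a uniformly convergent subsequence, and the limit pair $(\bar\f,\bar\psi)$ is conjugate $c$-concave and optimal by continuity of the objective and closedness of ${\mathcal F}_c$ under uniform limits. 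Finally, for any optimal $\bar\gamma$ in (\ref{MK}), chaining the inequalities
$$ \int_X \bar\f\,d\mu + \int_Y\bar\psi\,d\nu = \int_{X\times Y}\bigl(\bar\f(x)+\bar\psi(y)\bigr)\,\bar\gamma(dxdy) \le \int_{X\times Y} c(x,y)\,\bar\gamma(dxdy) = T_c(\mu,\nu) $$
(the first equality because $\bar\gamma$ has marginals $\mu,\nu$, the inequality from $(\bar\f,\bar\psi)\in{\mathcal F}_c$) shows, since the outer terms are equal by the duality formula, that $\bar\f(x)+\bar\psi(y)=c(x,y)$ holds $\bar\gamma$-a.e.

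**The main obstacle** is the attainment/compactness step: the primal side is compact for free (we minimize a lsc functional over the weakly-star compact set $\Gamma(\mu,\nu)$), but the dual supremum over ${\mathcal F}_c$ is a priori over a noncompact, translation-invariant family, so one must genuinely use the $c$-concave reduction $\f\leadsto\f^{cc}$ together with uniform continuity of $c$ and a normalization to restore compactness via Ascoli. Everything else — the biconjugation identity, the identification of the two suprema, and the complementary-slackness computation — is a direct and routine consequence of Theorem \ref{bi} and Lemma \ref{polar}. One minor point to handle carefully is that $F^{**}$ is defined as a supremum over $X^* = \M_b(X)^*$, which is larger than $C^0(X)$; but $F^*$ is $+\infty$ off $C^0(X)$ (it is weak-star lsc and $F$ is weak-star lsc, so only weak-star continuous linear functionals, i.e. elements of $C^0(X)$, matter), so the supremum effectively runs over $C^0(X)$ as written above.
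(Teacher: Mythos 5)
Your proposal is correct and follows essentially the same route as the paper's proof: biconjugation of the functional $F$ from Lemma \ref{polar} in the weak-star duality with $C^0(X)$, identification of the two suprema via $\psi\le\f^c$, the $c$-concave reduction $\f\leadsto\f^{cc}$ plus equicontinuity and normalization to get attainment by Ascoli--Arzel\`a, and the complementary-slackness integration against an optimal $\bar\gamma$. The only (welcome) addition is your explicit remark that the conjugation must be taken in the pairing $(\M_b(X),C^0(X))$ rather than with the full normed dual of $\M_b(X)$, a point the paper passes over silently.
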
 
\begin{proof}\ By Theorem \ref{bi} and Lemma \ref{polar}, we have 
$$\begin{array}{lll} T_c(\mu,\nu)\ =\ F^{**}(\mu)\ &=& \ \sup \left\{ \int_X \f \, d\mu + \int_Y \f^c \, d\nu \ :\ \f\in C^0(X)\right\}\\
&\le& \sup  \left\{  \int_X \f\, d\mu \ +\ \int_Y \psi\, d\nu\ :\ (\f,\psi) \in {\mathcal F}_c \right\} \\
&\le& T_c(\mu,\nu) \ , \end{array}$$
where the last inequality follows from the definition of ${\mathcal F}_c$. Therefore $\inf (\ref{MK}) = \sup (\ref{MK*})$.
Furthermore, in the right hand side of first equality, we  increase the supremum by substituting $\f$ with $\f^{cc}$ (recall that $\f^{ccc}=\f^c$).
Thus $$\sup (\ref{MK*}) = \sup \left\{ \int_X \f \, d\mu + \int_Y \f^c \, d\nu \ :\  \f\in C^0(X) \ ,\ \f\  \text{c-concave} \right\}\ .$$
Take a maximizing sequence $(\f_n,\f_n^c)$ of  c-concave conjugate functions. 
It is easy to check that $\{f_n\}$ is equicontinuous on $X$: this
follows from the c-concavity property and from the uniform continuity of $c$ ( observe that  $ \f_n (x_1) - \f_n (x_2)= \f_n^{cc} (x_1) - \f_n^{cc} (x_2)
\le\sup_Y\{ c(x_1,\cdot)- c(x_2,\cdot)\}$). Then, by Ascoli's Theorem,  possibly passing to subsequences, we may assume that:
$ \f_n - c_n $ converges uniformly to some continuous function $\bar\f$ where $\{c_n\}$ is a suitable sequence of reals.
Then, one checks that $\bar\f$ is still c-concave and that $(\f_n - c_n)^c= \f_n^c+c_n$ converges uniformly to $\bar\f^c$. Thus, recalling that
 $\mu(X)=\nu(Y)=1$, we deduce that:
$$ \begin{array}{lll} \sup (\ref{MK*}) &=& \lim_n \left(\int_X \f_n \, d\mu + \int_Y \f_n^c \, d\nu\right ) \\
&=& \lim_n \left[\int_X (\f_n\!-\!c_n) \, d\mu + \int_Y (\f_n^c\!+\!c_n) \, d\nu\right]\\
 &=& \int_X \bar\f \, d\mu + \int_Y {\bar\f}^c \,d\nu \ .\end{array}$$
The last assertion is a consequence of the extremality relation:
$$ 0 \ =\ \inf (\ref{MK}) - \sup (\ref{MK*})\ =\ \int_{X\times Y} \left( c(x,y) - \bar\f(x) -\bar\psi(y)\right) \, \bar\gamma(dxdy)\ .$$

\end{proof}

\begin{rmk}\label{p=2}\rm \ a) \ In their discrete version (i.e. $\mu, \nu$ are atomic measures), problems (\ref{MK}) and (\ref{MK*}) can be seen as
particular linear programming problems (see section 3.1) 

\med b)\ The case $X=Y \subset \R^n$ and $c(x,y)= \frac{1}{2} |x-y|^2$ is important. In this case, the notion of c-concavity is linked to convexity and 
Fenchel transform since, for every $\f\in C^0(X)$, one has \  $ \frac{|\cdot|^2}{2} - \f^c =(\frac{|\cdot|^2}{2} - \f)^*$.
Then if $(\bar \f, \bar\f^c)$ is a solution of (\ref{MK*}), we find that $\f_0(x):=  \frac{|x|^2}{2} - \bar\f(x)$ is convex continuous and that the 
extremality condition: $\bar\f(x) +\bar\f^c (y) = c(x,y)$ is equivalent to Fenchel equality $\f_0(x) + \f_0^*(y)= (x| y)$.
Therefore, any optimal $\bar \gamma$ is supported in the graph of the subdifferential map $\partial\f_0$. 
In the case where $\mu$ is absolutely continuous with respect to the Lebesgue measure, it is then easy to deduce that the optimal $\bar \gamma$ is unique
and that $\bar \gamma=\gamma_{T_0}$ where $T_0=\nabla \f_0$ is the unique gradient (a.e. defined ) of a convex function such that $\nabla
\f_0^\sharp(\mu)=\nu$. This is a celebrated result by Y. Brenier (see for instance in the monographs \cite{Ev,Vi})
\end{rmk}

\subsection{The distance case} In the following, we assume that $X=Y$ and that $c(x,y)$ is a semi-distance.
As an immediate consequence of the triangular inequality, we have the following equivalence:
$$ \f \quad \text{c-concave} \quad \Leftrightarrow \quad \f(x)-\f(y) \le c(x,y)\quad, \ \forall (x,y) \quad \Leftrightarrow \quad \f^c = -\f $$
Let us denote  $\ {\rm Lip}_1(X):=\left\{ u\in C^0(X) \ :\ u(x) -u(y)\le c(x,y) \right\}\ .$
The first assertion of Theorem \ref{dual0} becomes the {\em Kantorovich-Rubintein} duality formula:
\begin{equation}\label{dual1} T_c(\mu,\nu) \ =\ \max \left\{ \int_X u \, d(\mu-\nu) \ :\ u\in {\rm Lip}_1(X)\right\}\ .\end{equation}
As it appears, $T_c(\mu,\nu)$ depends only on the difference $f=\mu-\nu$ which belongs to the space $\M_0(X)$ of signed measure on $X$ with zero average.
Defining $N(f):=T_c(f^+,f^-)$ provides a semi-norm ({\em Kantorovich norm}) on $\M_0(X)$ (it turns out that $\M_0(X)$ is not complete and that in general
its completion is a strict subspace of the dual of ${\rm Lip}(X)$).

\med We will now specialize to the case where $X$ is a compact manifold equipped with a geodesic distance. This will allow us to link the original problem
to another  primal-dual formulation closer to that considered in section 3.2 and yielding to a connection with partial differential equations.
As a model example, let us assume that $K=\overline\O$ where $\O$ is a bounded connected open subset  of $\R^n$ with a Lipschitz boundary. Let
$\Sigma\subset \overline{\O}$ be  a compact subset (on which the transport will have zero cost) and define
\begin{equation}\label{geo} c(x,y):=\ \inf \left\{ {\mathcal H}^1(S\setminus \Sigma) \ :\ S \ \textrm{Lipschitz curve joining $x$ to $y$}\ ,\ S\subset
\overline{\O}
\right\}\ ,\end{equation}
where ${\mathcal H}^1$ denotes the one dimensional Hausdorff measure (length).
 It is easy to check that $$c(x,y)= \min \{ \delta_\O(x,y), \delta_\O(x,\Sigma)+ \delta_\O(y,\Sigma)\}\ ,$$ where 
$\delta_\O(x,y)$ is the geodesic distance on $\O$ (induced by the Euclidian norm) . Furthermore, the following characterization holds:
\begin{equation}\label{Lip} u\in {\rm Lip}_1(X) \ \Longleftrightarrow u\in W^{1,\infty}(\O) \ ,\ |\nabla u| \le 1 \ \textrm{a.e. in $\O$}\ ,\ u= cte\ 
\textrm{on}\
\Sigma\ .\end{equation}
Since $f:=\mu-\nu$ is balanced, the value of the constant on $\Sigma$ in (\ref{Lip}) is irrelevant and can be set to $0$. Thus we may rewrite
the right hand side member of (\ref{dual1})
in a equivalent way as 
\begin{equation}\label{dual2} \max \left\{ \int_{\overline\O} u \, df \ :\ u\in W^{1,\infty}(\O)\ ,\ |\nabla u|\le 1\ \textrm{
a.e. on $\O$}\ ,\
 u=0\ \textrm{  on $\Sigma$} \right\}.\end{equation}

We will now derive a new dual problem for (\ref{dual2}) by using Proposition \ref{combi}. To that aim, we consider
$X=C^1(\overline\O)$ (as a closed subspace of $W^{1,\infty}(\O)$), $Y= C^0({\overline\O};\R^n)$, $Y^*= \M_b({\overline\O};\R^n)$ and the operator
$A: u\in X\mapsto \nabla u\in Y$.
\begin{theorem}\label{measure} \ Let $\mu,\nu \in \P(\overline{\O})$, $f=\mu-\nu$ and $c$ defined by (\ref{geo}). Then:
\begin{equation}\label{dual3}
 T_c(\mu,\nu) \  =\ \min \left\{ \int_{\overline\O} |\lambda|\ :\ \lambda\in  \M_b({\overline\O};\R^n) \ ,\ 
-\div \lambda = f \ \textrm{  on }\ \overline{\O}\setminus\Sigma \right\}\end{equation}
where the divergence condition is intended in the  sense that $\int_{\overline\O} \lambda\cdot \nabla\f = \int_{\overline\O} \f \, df$, for all
$\f\in C^\infty$ compactly supported in $\R^n\setminus \Sigma$.
\end{theorem}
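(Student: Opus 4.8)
The plan is to apply Proposition~\ref{combi} to the maximization problem (\ref{dual2}), rewritten as an infimum of a sum $\Phi(u) + \Psi(Au)$. First I would set $X = C^1(\overline\O)$, $Y = C^0(\overline\O;\R^n)$, so that $Y^* = \M_b(\overline\O;\R^n)$, and $A : u \mapsto \nabla u$. I would take $\Psi(v) := \chi_{B}(v)$ where $B = \{v \in C^0(\overline\O;\R^n) : |v(x)| \le 1 \ \forall x\}$; by Example~3 (formula (\ref{H})) its conjugate is $\Psi^*(\lambda) = \int_{\overline\O} \rho^0_B(\lambda) = \int_{\overline\O} |\lambda|$, the total variation, since $\rho^0_B$ is the Euclidean norm. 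For the linear part I would take $\Phi(u) := -\int_{\overline\O} u \, df$ if $u = 0$ on $\Sigma$, and $+\infty$ otherwise; then $-\Phi^*(-A^*\sigma)$ decodes the divergence constraint. With these choices, $\inf_u \{\Phi(u) + \Psi(Au)\}$ equals the negative of the supremum in (\ref{dual2}), hence equals $-T_c(\mu,\nu)$ by Theorem~\ref{measure}'s hypotheses and (\ref{dual1})--(\ref{dual2}).

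Next I would carry out the computation of the dual side $\sup_{\sigma \in Y^*}\{-\Phi^*(-A^*\sigma) - \Psi^*(\sigma)\}$. The term $-\Phi^*(-A^*\sigma)$: by definition $\Phi^*(g) = \sup_u\{\langle u, g\rangle - \Phi(u)\}$ over $u \in C^1$ with $u = 0$ on $\Sigma$; writing $g = -A^*\sigma$, i.e. $\langle u, -A^*\sigma\rangle = -\int \nabla u \cdot d\sigma$, one gets $\Phi^*(-A^*\sigma) = \sup_u\{-\int \nabla u \cdot d\sigma + \int u\, df : u \in C^1, u|_\Sigma = 0\}$. Since the constraint set is a linear subspace, this supremum is $0$ if $\int \nabla u \cdot d\sigma = \int u\, df$ for all such $u$ (which is precisely the stated weak divergence condition $-\operatorname{div}\sigma = f$ on $\overline\O \setminus \Sigma$, tested against $C^1$ functions vanishing near $\Sigma$ — or, after the usual density argument, against $C^\infty$ functions compactly supported in $\R^n \setminus \Sigma$), and $+\infty$ otherwise. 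Combined with $\Psi^*(\sigma) = \int_{\overline\O}|\sigma|$, the dual problem becomes exactly $\sup\{-\int_{\overline\O}|\sigma| : -\operatorname{div}\sigma = f \text{ on } \overline\O\setminus\Sigma\} = -\min\{\int|\lambda| : \ldots\}$, which is the right-hand side of (\ref{dual3}).

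Finally I would verify the qualification hypothesis of Proposition~\ref{combi}: there must exist $u_0 \in D(A) = C^1(\overline\O)$ with $\Phi(u_0) < +\infty$ (take $u_0 = 0$, which vanishes on $\Sigma$ and gives $\Phi(0) = 0$) and with $\Psi$ continuous at $Au_0 = 0$. The latter holds because $\Psi = \chi_B$ and $0$ is an interior point of the unit ball $B$ in $C^0(\overline\O;\R^n)$ (the constant function $0$ has a sup-norm neighbourhood contained in $B$), so $\Psi$ is $\equiv 0$, hence continuous, near $0$. Proposition~\ref{combi} then yields equality of the two values and the attainment of the supremum in the dual, which is the claimed minimum in (\ref{dual3}). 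The main obstacle I anticipate is the careful handling of the divergence constraint near $\Sigma$: one must check that testing against $C^1(\overline\O)$ functions vanishing on $\Sigma$ is equivalent to testing against $C^\infty$ functions compactly supported in $\R^n \setminus \Sigma$ (a density/truncation argument near $\Sigma$, using the Lipschitz regularity of $\partial\O$ and the compactness of $\Sigma$), and to confirm that $C^1(\overline\O)$ with the $W^{1,\infty}$ norm is a legitimate choice making $\Phi$ and the pairing well-defined — equivalently that no boundary terms on $\partial\O \setminus \Sigma$ are secretly imposed, which is consistent with the free (Neumann-type) boundary behaviour inherent in the transport distance $c$ being the geodesic distance inside $\overline\O$.
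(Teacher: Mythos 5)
Your functional-analytic setup coincides with the paper's proof: the same choice $X=C^1(\overline\O)$, $Y=C^0(\overline\O;\R^n)$, $A=\nabla$, $\Phi(u)=-\int_{\overline\O}u\,df$ on $\{u|_\Sigma=0\}$, $\Psi=\chi_B$ with $B$ the unit ball of $C^0(\overline\O;\R^n)$; the identification $\Psi^*(\lambda)=\int_{\overline\O}|\lambda|$ via (\ref{H}); the reading of $\Phi^*(-A^*\sigma)$ as the indicator of the divergence constraint; and the verification of the qualification hypothesis at $u_0=0$. All of this is correct.

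The genuine gap is in the sentence asserting that $\inf_u\{\Phi(u)+\Psi(Au)\}$ ``equals the negative of the supremum in (\ref{dual2}), hence equals $-T_c(\mu,\nu)$.'' Proposition \ref{combi} is applied with $X=C^1(\overline\O)$, so what it actually delivers is
\begin{equation*}
\sup\left\{\int_{\overline\O}u\,df\ :\ u\in C^1(\overline\O),\ |\nabla u|\le 1 \ \textrm{on}\ \O,\ u=0\ \textrm{on}\ \Sigma\right\}\ =\ \min(\ref{dual3})\ ,
\end{equation*}
whereas (\ref{dual2}) --- which is what equals $T_c(\mu,\nu)$ through (\ref{Lip}) and the Kantorovich--Rubinstein formula (\ref{dual1}) --- is a supremum over the strictly larger class $W^{1,\infty}(\O)$. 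The inequality $\sup_{C^1}\le\sup_{W^{1,\infty}}$ is trivial; the reverse inequality is the technical heart of the proof and is absent from your write-up: one must take a maximizer $\bar u$ of (\ref{dual2}) and approximate it uniformly by functions $u_n\in C^1(\overline\O)$ that still satisfy both constraints $|\nabla u_n|\le 1$ and $u_n=0$ on $\Sigma$ (truncation near $\Sigma$ plus convolution, with a dilation to keep the gradient bound exact near the Lipschitz boundary; the paper defers the details to \cite{BoBuDe}). The obstacle you flag at the end --- equivalence of the two test classes for the divergence condition and well-definedness of the pairing --- is real but secondary and lives on the dual side; the missing step is on the primal side. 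As written, your argument only establishes $\min(\ref{dual3})\le T_c(\mu,\nu)$.
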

\begin{proof}\ (\textit{sketch})\ We apply Proposition\ref{combi} with 
$\phi(u)= - \int_{\overline\O} u \, df$ if $u=0$ on $\Sigma$  \ ($+\infty$\ otherwise) and
$\psi(v)= 0 \ $ if $|v|\le 1$ on $\overline\O$ \ ($+\infty$ \ otherwise). We obtain that the minimum $\alpha$ in (\ref{dual2}) is reached 
and that $\alpha =\beta$ where
 $$- \beta \ := \ \inf \left\{- \int_{\overline\O} u \, df \ :\ u\in C^1(\overline\O)\ ,\ |\nabla u|\le 1\ \textrm{ on }\ \O\,\  u=0\ \textrm{  on }\
\Sigma\right\}\ .$$
To prove that $\beta= T_c(\mu,\nu)$, we consider a maximizer $\bar u$ in (\ref{dual2}) and prove that it can be approximated uniformly by a sequence
$\{u_n\}$ of functions in $C^1(\overline{\O})$ which satisfy the same constraints. This technical part is done by truncation and convolution arguments
 (we refer to \cite{BoBuDe} for
details).

\end{proof}
\begin{rmk}\label{4.1}\ \rm By localizing the integral identity associated with (\ref{dual3}), it is possible to deduce the optimality conditions which
characterizes optimal pairs $(\bar u,\bar\lambda)$ for (\ref{dual2})(\ref{dual3}) (without requiring any regularity). This is done by using
a weak notion of
tangential gradient with respect to a measure (see
\cite{BoBuSe,BoFr1}).  If $\bar\lambda= \bar\sigma\, dx$ where $\sigma\in L^1(\O;\R^n)$ and if $\Sigma\subset \bd\O$,
then we find that
$\bar\sigma =a\, \nabla \bar u$ where the pair $(\bar u, a)$ solves the system:
$$\left\{\begin{array}{cclll} -\div ( a\, \nabla \bar u) &=& f & \textrm{on }\ \O &\quad \textit{(diffusion equation)} \\
|\nabla \bar u |  &=& 1 & \textrm{a.e. on}\ \{a>0\}  & \quad \textit{(eikonal equation)}\\
u&=& 0 & \textrm{a.e. on}\ \Sigma & \\
\disp\frac{\partial u}{\partial n} &=& 0 & \textrm{on }\ \Sigma &
\end{array}\right.$$

\end{rmk}

\begin{rmk}\label{4.2}\ \rm  Given a solution $\bar \gamma$ for (\ref{MK}), we can construct a solution $\bar\lambda$
for (\ref{dual3}) by selecting for every $(x,y)\in \spt(\bar\gamma)$ a geodesic curve $S_{xy}$ joining $x$ and $y$ (possibly passing trough
the free cost zone $\Sigma$) and by setting, for every test $\phi$:
$$ \langle \bar\lambda, \phi\rangle := \int_{\overline{\O}\times \overline{\O}} \left(\int_{S_{xy}} \phi\cdot \tau_{S_{xy}} d {\mathcal H}^1 \right)
 \bar\lambda(dxdy)\ ,$$
where $\tau_{S_{xy}}$ denote the unit oriented  tangent vector (see \cite{BoBu}). It is also possible to show (see \cite{Am}) that any solution
$\bar\lambda$ can be represented as before through a particular solution $\bar \gamma$. As a consequence, the support of any solution
$\bar\gamma$ of (\ref{dual3}) is supported in the geodecic envelope of the set $\spt(\mu) \cup\spt(\nu) \cup \Sigma\ . $ However we
 stress the fact that, in general, there
is no uniqueness at all  of the optimal triple
$(\bar\gamma,\bar u,\bar\lambda)$ for (\ref{MK})(\ref{dual2})(\ref{dual3}).

\end{rmk} 
\begin{rmk}\label{4.3}\ \rm An approximation procedure for particular solutions of problems (\ref{dual2})(\ref{dual3})
 can be obtained by solving a p-Laplace equation and then by sending $p$ to infinity. Precisely,
consider the solution $u_p\in W^{1,p}(\O)$ of 
$$ -\div (|\nabla u|^{p-2} \nabla u)\ =\ f \quad \textrm{on $\overline{\O}\setminus\Sigma$} \quad ,\quad u=0 \ \textrm{on $\Sigma$} \ ,$$ 
which, for $p>n$, exists (due to the compact embedding $W^{1,p}(\O)\subset C^0(\overline{\O}$)) and is unique. 
In \cite{BoBuDe} it is proved that the sequence $\{(u_p,\sigma_p)\}$, where $\sigma_p= |\nabla u_p|^{p-2} \nabla u_p$, is relatively compact
in $\M_b(\overline{\O};\R^n) \times C^0(\overline{\O}$ (weakly star with respect to the first component) and that every cluster point
 $(\bar u,\bar \lambda)$ solves (\ref{dual2})(\ref{dual3}). It is an open problem to know if wether or not such a cluster point is unique.
If the answer is yes, the process decribed above would  select one optimal pair among all possible solutions. As far as problem (\ref{dual2}) is concerned,
this problem is connected with the theory of {\em viscosity solutions} for the {\em infinite Laplacian} (see \cite{Ev}) although this theory does not
provide an answer as it erases the role of the source term $f$.  On the other
hand, a new {\em entropy selection principle} should be found for the solutions of dual problem (\ref{dual3}). In fact, the following partial result holds:
Let $E: \M_b(\overline{\O};\R^n) \to \R\cup\{+\infty\}$ be the functional defined by
$$ E(\lambda) := \int_\O |\sigma| \log (|\sigma|) \,dx \quad \textrm{if $\lambda \ll dx$ and $\sigma=\frac{d\lambda}{d |\lambda|}$} \quad,\quad
+\infty \quad \textrm{otherwise}\ .$$
Assume that (\ref{dual3}) admits at least one solution  $\lambda_0$ such taht $E(\lambda_0) <+\infty$. Then it can be shown that the sequence
$\{\sigma_p\}$ does converge weakly-star to
$\bar\lambda$  the unique minimizer of the problem
$$ \inf \left\{ E(\lambda) \ :\ \lambda \ \textrm{solution of (\ref{dual3})}\right\}\ .$$
The general case, in particular when  all optimal measures are singular, is open.

\end{rmk}
\begin{rmk}\label{4.4}\ \rm Variational problems  (\ref{dual2})(\ref{dual3}) have important counterparts in the theory of elasticity and in
 optimal design problems (see \cite{BoBu}). They read respectively as
$$ \begin{array}{ll} &\disp \max \left\{ \int_{\overline\O} u \cdot df \ :\ u\in \cap_{p>1} W^{1,p}(\O;\R^n)\ ,\ \nabla u(x) \in K\  \textrm{a.e. on}\ \O\
,\
 u=0\ \textrm{  on }\ \Sigma \right\}\ ,\\
&\disp \min \left\{ \int_{\overline\O} \rho^0_K(\lambda)\ :\ \lambda\in  \M_b({\overline\O};\R^{n^2}_{\rm sym}) \ ,\ 
-\div \lambda = f \ \textrm{  on }\ \overline{\O}\setminus\Sigma \right\}\ ,\end{array}$$
\end{rmk}
where $K\subset \R^{n^2}_{\rm sym})$ is a convex  compact subset of symmetric second order tensors associated with the elastic material, \ $\rho^0_K(\xi)=
\sup\{\xi\cdot z : z\in K\}$ is convex positively one homogeneous and the functional on measures $\int_{\overline\O} \rho^0_K(\lambda)$ is intended in
the sense given in (\ref{H}). A celebrated example is given by {\em Michell's problem} (\cite{Mi}) where $n=2$ and $K:=\{z\in \R^{n^2}_{\rm sym},
|\rho(z)|\le 1\}$, being $\rho(z)$ the largest singular value of $z$. The potential $\rho^0_K$ is given by the non differentiable convex function 
$\rho^0_K(\xi)= \tau_1(\xi) + \tau_2(\xi)$, where the $\tau_i(\xi)$'s are the singular values of $\xi$.

Unfortunately it is  not known if the  vector variational problems above can be linked to an optimal transportation problem of the kind (\ref{MK}), even
if  the analoguous of equivalence (\ref{Lip}) does exist in the  Michell's case, namely (for $\O$ convex):
$$ \rho(e(u)) \le 1 \quad \textrm{on $\O$ \ }\quad  \Longleftrightarrow\quad  |(u(x)-u(y)|x-y)| \le |x-y|^2 \ ,\ \forall (x,y)\ .$$

\bigskip

\end{document}